\newcommand{\newsection}[1]{\setcounter{equation}{0} \section{#1}}
\theoremstyle{plain}
\newtheorem*{thm*}{Theorem}
\theoremstyle{definition}
\theoremstyle{remark}
\newcommand{\Hil}{\mathcal{H}}
\newcommand{\q}{\mathcal{Q}}
\newcommand{\s}{\mathcal{S}}
\newcommand{\clb}{\mathcal{B}}
\newcommand{\cle}{\mathcal{E}}
\newcommand{\clh}{\mathcal{H}}
\newcommand{\clq}{\mathcal{Q}}
\newcommand{\cls}{\mathcal{S}}
\newcommand{\z}{\bm{z}}
\newcommand{\w}{\bm{w}}
\newcommand{\I}{\mathcal{I}}
\newcommand{\Nat}{\mathbb{N}}
\newcommand{\Comp}{\mathbb{C}}
\newcommand{\Int}{\mathbb{Z}}
 \newcommand{\D}{\mathbb{D}}
\newcommand{\ot}{\otimes}
\newcommand{\vp}{\varphi}
\newtheorem{Theorem}{\sc Theorem}[section]
\newtheorem{Lemma}[Theorem]{\sc Lemma}
\newtheorem{Proposition}[Theorem]{\sc Proposition}
\newtheorem{Corollary}[Theorem]{\sc Corollary}
\newtheorem{Definition}[Theorem]{\sc Definition}
\newtheorem{Example}[Theorem]{\sc Example}
\newtheorem{Remark}[Theorem]{\sc Remark}
\newtheorem{Note}[Theorem]{\sc Note}
\newtheorem{Question}{\sc Question}
\newtheorem{ass}[Theorem]{\sc Assumption}
\newcommand{\bt}{\begin{Theorem}}
\def\beginlem{\begin{Lemma}}
\def\beginprop{\begin{Proposition}}
\def\begincor{\begin{Corollary}}
\def\begindef{\begin{Definition}}
\def\beginexamp{\begin{Example}}
\def\beginrem{\begin{Remark}}
\def\beginq{\begin{Question}}
\def\beginass{\begin{ass}}
\def\beginnote{\begin{Note}}
\newcommand{\et}{\end{Theorem}}
\def\endlem{\end{Lemma}}
\def\endprop{\end{Proposition}}
\def\endcor{\end{Corollary}}
\def\enddef{\end{Definition}}
\def\endexamp{\end{Example}}
\def\endrem{\end{Remark}}
\def\endq{\end{Question}}
\def\endass{\end{ass}}
\def\endnote{\end{Note}}
\begin{document}

\title{Star-Generating Vectors of Rudin's Quotient Modules}

\author[Chattopadhyay] {Arup Chattopadhyay}
\address{
(A. Chattopadhyay) Indian Statistical Institute \\ Statistics
andMathematics Unit \\ 8th Mile, Mysore Road \\ Bangalore \\ 560059
\\ India}

\email{2003arupchattopadhyay@gmail.com, arup@isibang.ac.in}

\author[Das] {B. Krishna Das}

\address{
(B. K. Das) Indian Statistical Institute \\ Statistics and
Mathematics Unit \\ 8th Mile, Mysore Road \\ Bangalore \\ 560059 \\
India} \email{dasb@isibang.ac.in, bata436@gmail.com}

\author[Sarkar]{Jaydeb Sarkar}

\address{
(J. Sarkar) Indian Statistical Institute \\ Statistics and
Mathematics Unit \\ 8th Mile, Mysore Road \\ Bangalore \\ 560059
\\ India}

\email{jay@isibang.ac.in, jaydeb@gmail.com}

\subjclass[2010]{46M05, 47A13, 47A15, 47A16} \keywords{Hardy space,
Blaschke product, inner functions, invariant and co-invariant
subspaces, generating sets, rank}

\begin{abstract}
The purpose of this paper is to study a class of quotient modules of
the Hardy module $H^2(\mathbb{D}^n)$. Along with the two variables
quotient modules introduced by W. Rudin, we introduce and study a
large class of quotient modules, namely Rudin's quotient modules of
$H^2(\mathbb{D}^n)$. By exploiting the structure of minimal
representations we obtain an explicit co-rank formula for Rudin's
quotient modules.
\end{abstract}

\maketitle

\section*{Notation}

\begin{list}{\quad}{}
\item $\mathbb{N}$ \quad \quad \; Set of all natural numbers
including 0.

\item $n$ \quad \quad \; Natural number $n \geq 2$, unless
specifically stated otherwise.
\item $\mathbb{N}^n$ \quad \quad $\{\bm{k} = (k_1, \ldots, k_n) : k_i \in \mathbb{N}, i = 1,
\ldots, n\}$.
\item $\mathbb{C}^n$  \quad \quad Complex $n$-space.
\item $\bm{z}$ \quad \quad \; $(z_1, \ldots, z_n) \in \mathbb{C}^n$.
\item $\bm{z}^{\bm{k}}$ \quad \quad \,$z_1^{k_1}\cdots
z_n^{k_n}$.
\item $T$ \quad \quad \; $n$-tuple of commuting operators $(T_1, \ldots, T_n)$.
\item $T^{\bm{k}}$ \quad \quad $T_1^{k_1} \cdots
T_n^{k_n}$.
\item $\mathbb{C}[\z]$  \quad \, $\mathbb{C}[z_1, \ldots, z_n]$,
the polynomial ring over $\mathbb{C}$ in $n$-commuting
variables.
\item $\mathbb{D}^n$  \quad \quad Open unit polydisc $\{\z : |z_i|
<1\}$.
\end{list}

Throughout this note all Hilbert spaces are over the complex field
and separable. Also for a closed subspace $\cls$ of a Hilbert space
$\clh$, we denote by $P_{\cls}$ the orthogonal projection of $\clh$
onto $\cls$.

\newsection{Introduction}\label{sec:1}

This paper is concerned with the question of generating vectors for a
commuting tuple of bounded linear operators on a separable Hilbert
space. Let $T:= (T_1, \ldots, T_n)$ be an $n$-tuple ($n \geq 1$) of
commuting bounded linear operators on a separable Hilbert space
$\clh$, and let $S$ be a non-empty subset of $\clh$. The
$T$-generating hull of $S$ is defined by \[[S]_T: = \bigvee \{p(T_1,
\ldots, T_n) h : p \in \mathbb{C}[\z], h \in S\},\]where
$\mathbb{C}[\z] := \mathbb{C}[z_1, \ldots, z_n]$ is the ring of
polynomials of $n$-commuting variables. It is easy to verify that
$[S]_T$ is the smallest closed subspace of $\clh$ such that $S
\subseteq [S]_T$ and $T_i ([S]_T )\subseteq [S]_T$ for all
$i=1,\dots,n$. That is,
\[[S]_T = \bigcap \{\cls : \cls
\mbox{~closed}, S \subseteq \cls, T_i \cls \subseteq \cls, ~i = 1,
\ldots, n\}.\] The \textit{rank} of the tuple $T$ is the unique
number $\mbox{rank~} T$ defined by \[ \mbox{rank~} T =
\mbox{inf}\{\# S : S \subseteq \clh, ~[S]_T = \clh\}\in \mathbb{N} \cup \{\infty\}.\]
We say that $S$ is
a $T$-generating subset if $[S]_T = \clh$. In this case, we also say
that the elements in $S$ are \textit{$T$-generating vectors} of
$\clh$.

\noindent One of the most intriguing and important open problems in
operator theory, closely related to the invariant subspace problem,
is the existence of nontrivial generating set for a tuple of operators.
Also one may ask when the rank of $T$ is finite.

This problem is known to be hard to solve in general. Nevertheless,
one has a better chance in the special case of the restriction (compression)
of tuple of multiplication operators, $M_z = (M_{z_1},
\ldots, M_{z_n})$, to a joint $M_z$-invariant (co-invariant)
subspace of an analytic reproducing kernel Hilbert space $\clh$ over
a domain in $\mathbb{C}^n$. The reason behind this hope is that
underlying analytic structure of the kernel function could
provide useful information regarding the restriction (compression)
of $M_z$ to a joint $M_z$-invariant (co-invariant) subspace of
$\clh$.

In this paper, we confine our attention to a class of quotient
Hilbert modules, namely ``Rudin's quotient modules'', after W. Rudin
(\cite{rudin}), of the Hardy module $H^2(\D^n)$ over the unit
polydisc $\mathbb{D}^n$.  We focus here mainly on the study of
$(M_{z_1}^*|_{\clq}, \ldots, M_{z_n}^*|_{\clq})$-generating sets of
a Rudin's quotient module $\q$ of $H^2(\D^n)$. To be more precise we
must introduce some notations first.

Let $n \geq 1$ be a natural number and $\mathbb{D}^n$ be the open
unit polydisc in $\mathbb{C}^n$. The \textit{Hardy module}
$H^2(\mathbb{D}^n)$ over $\mathbb{D}^n$ is the Hilbert space of all
holomorphic functions $f$ on $\mathbb{D}^n$ such that
\[\|f\|_{H^2(\mathbb{D}^n)}:= \left(\sup_{0\leq r< 1}
\int_{\mathbb{T}^n}|f(r\z)|^2~d\z \right)^{\frac{1}{2}}< \infty,
\]where $d\z$ is the normalized Lebesgue measure on the torus
$\mathbb{T}^n$, the distinguished boundary of $\mathbb{D}^n$, and
$r\z:=\left( rz_1, \ldots,rz_n\right)$ (cf. \cite{rudin}). For each
$i = 1, \ldots, n$, define the multiplication operator by the
coordinate function $z_i$ as \[(M_{z_i} f)(\w) = w_i f(\w). \quad
\quad (\w \in \D^n)\] We will often identify $H^2(\D^n)$ with the
$n$-fold Hilbert space tensor product $H^2(\D) \otimes \cdots \otimes H^2(\D)$.
In this identification,
the multiplication operator $M_{z_i}$ can be realized as
$I_{H^2(\D)} \otimes \cdots \otimes
\underbrace{M_z}\limits_{\textup{i-th place}} \otimes \cdots \otimes
I_{H^2(\D)}$ for all $i = 1, \ldots, n$.

\noindent One can easily verify that \[M_{z_i} M_{z_j} = M_{z_j}
M_{z_i}, \quad M_{z_i}^* M_{z_i} = I_{H^2(\D^n)}, \quad \quad \quad
(i, j = 1, \ldots, n)\]that is, $(M_{z_1}, \ldots, M_{z_n})$ is an
$n$-tuple of commuting isometries. Moreover, for $n \geq 2$,
\[M_{z_i}^* M_{z_j} = M_{z_j} M_{z_i}^*, \quad \quad (1 \leq i < j
\leq n)\]that is, $(M_{z_1}, \ldots, M_{z_n})$ is a \textit{doubly
commuting} tuple of isometries.

A closed subspace $\mathcal{S}\subseteq H^2(\mathbb{D}^n)$ is said to be a
\textit{submodule} of $H^2(\mathbb{D}^n)$ if
$M_{z_i}(\mathcal{S})\subseteq \mathcal{S}$ for all
$i=1,2,\ldots,n$, and a closed subspace $\mathcal{Q}\subseteq
H^2(\mathbb{D}^n)$ is said to be a \textit{quotient module} if
$\mathcal{Q}^{\perp} \left(= H^2(\mathbb{D}^n)\ominus \mathcal{Q}
\cong H^2(\D^n)/ \clq \right)$ is a submodule of
$H^2(\mathbb{D}^n)$. For notational simplicity we shall let
\[\mbox{rank~} \cls : = \mbox{rank~}M_{\z}|_{\cls}, \quad \mbox{and}
\quad \mbox{co-rank~} \clq = \mbox{rank~} M_{\z}^*|_{\clq},\]where
$M_{\z}|_{\s} = (M_{z_1}|_{\s}, \ldots, M_{z_n}|_{\s})$ and
$M^*_{\z}|_{\q} = (M^*_{z_1}|_{\q}, \ldots, M^*_{z_n}|_{\q})$. It is
quite natural to call the vectors of a
$M_{\z}^*|_{\clq}$-generating set of a quotient module $\clq$ as
\textit{star-generating} vectors of $\q$. We say that a quotient
module $\q$ is \textit{star-cyclic} if $\mbox{co-rank~}\q = 1$.

Let us examine now the problem of generating vectors for submodules
and quotient modules of $H^2(\D)$. Let $\clq$ be a
quotient module of $H^2(\D)$. Then from Beurling's theorem it
follows that $\clq = \clq_{\vp}$ for some inner function $\vp \in
H^{\infty}(\D)$ (that is, $\vp$ is a bounded holomorphic function on
$\D$ and $|\vp| = 1$ a.e on $\mathbb{T}$), where
\[\clq_{\vp} := H^2(\D) \ominus \cls_{\varphi},
\;\mbox{and} \; \cls_{\varphi} := \varphi H^2(\D).\]
Next we consider the following
two cases:

\noindent Case I: For the submodule $\cls_{\varphi}$, it
follows that $\varphi$ is a generating vector for $M_z|_{\cls_{\varphi}}$. In
particular, $\cls_{\vp}$ is singly generated.

\noindent Case II: For the quotient module $\clq_{\vp}$, it turns out that
$M_z^* \varphi$ is a generating vector for $M_z^*|_{\clq_{\vp}}$ (see
Proposition \ref{*-one}). In particular, $\clq_{\vp}$ is singly
star-generated.

The above conclusions fail if we replace $H^2(\D)$ by $H^2(\D^2)$.
The following counterexample, due to Rudin \cite{rudin}, is
particularly concise and illustrate the heuristic ideas behind our
general consideration: Let $\s$ be a submodule of $H^2(\D^2)$
consists of all functions in $H^2(\D^2)$ which have a zero of order
greater than or equal to $m$ at $(0,\alpha_m)=(0, 1-m^{-3})$. Then
$\mbox{rank~} {\s} = \infty$. Without giving a proof we note that
the above submodule can be represented in the following way (see
\cite{Se}):
\[
\s=\s_{\Phi}:= \bigvee_{m=0}^{\infty} \varphi_m H^2(\D) \otimes z^m
H^2(\mathbb{D}),\]
 where
$\Phi=\{\varphi_m\}_{m\ge 0}$ is the decreasing sequence of Blaschke
products defined by
\[
\varphi_0=\prod_{i=1}^{\infty}b_{\alpha_i}^i,\quad\quad \varphi_m =
\frac{\varphi_{m-1}} {\prod_{i=m}^{\infty}b_{\alpha_i}}.\quad
\quad\quad (m\ge 1)
\]Here and throughout this paper, for each $\alpha \in \D$, we denote
by $b_{\alpha}$ the Blaschke factor
\[b_{\alpha}(z) = \frac{z - \alpha}{1 - \bar{\alpha} z}. \quad \quad
\quad (z \in \D)\]

Subsequently, Rudin's result was improved and analyzed for submodules
and quotient modules of $H^2(\D^2)$ by several authors, such as
Ahern and Clark \cite{AC}, Agrawal, Clark and Douglas \cite{ADC}, K.
J. Izuchi, K. H. Izuchi and Y. Izuchi \cite{I1}, \cite{I2}, Seto and
Yang \cite{Se}.

Inspired and motivated by the above fact, we introduce the notion
of a Rudin's quotient module: Let $\Phi_i = \{\varphi_{i,k}\}_{k=
-\infty}^{\infty}$ be a sequence of Blaschke products (see
\eqref{Blas-def}) for all $i = 1, \ldots, n$. Moreover, we assume that
for each $i = 1, \ldots, n$, $\{\varphi_{i,k}\}_{k=
-\infty}^{\infty}$ has a least common multiple $\vp_i$,
that is, $\varphi_{i,k} |\varphi_i$ for all $-\infty<k<\infty$ and
if $\psi_i$ is a Blaschke product with the same property and
$\psi_i|\varphi_i$, then $\psi_i$ is a constant multiple of
$\varphi_i$. The \textit{Rudin's quotient module}
corresponding to $\bm{\Phi} := (\Phi_1, \ldots, \Phi_n)$ is the
quotient module $\q_{\bm{\Phi}}$ of $H^2(\D^n)$ defined by
\[\begin{split} \q_{\bm{\Phi}} & =
\bigvee_{k=-\infty}^{\infty}\left(H^2(\D) \ominus \varphi_{1,k}
H^2(\D)\right) \otimes \cdots \otimes \left(H^2(\D)\ominus
\varphi_{n,k} H^2(\D)\right)\\ & =  \bigvee_{k=-\infty}^{\infty}
\clq_{\varphi_{1,k}} \otimes \cdots \otimes \clq_{\varphi_{n,k}}.
\end{split}
\]As we have already mentioned, when $n = 2$, in various situations,
such Rudin's quotient module were already considered by Rudin
\cite{rudin}, K. J. Izuchi et al. \cite{I1, I2}, Young and Seto
\cite{Se} and Seto \cite{MS} (for $n \geq 2$, see Douglas et al.
\cite{DPSY}, Guo \cite{Guo1, Guo2}, Sarkar \cite{JS1, JS2} and
Chattopadhyay et al. \cite{CDS}).

The purpose of this paper is to compute and analyze the co-rank of
$\clq_{\bm{\Phi}}$. We also consider the case when some of the inner
sequences are increasing sequence of Blaschke products and rest of
them are decreasing sequence of Blaschke products.  Along the way,
we obtain some results concerning minimal representations and
compute co-ranks of a class of quotient modules of
$H^2(\mathbb{D}^n)$.

Some of our main results are generalizations of theorems due to K.
J. Izuchi, K. H. Izuchi and Y. Izuchi \cite{I1}. One of our main
results, Theorem \ref{mainth2}, concerning co-rank of a Rudin's
quotient module is a refined and generalized version of results by
Izuchi et al. (Theorems 4.2 and 4.3 in \cite{I1}). In particular, we
point out and correct an error in the proof of the main result in
the paper by Izuchi et al. (see Remark \ref{ce}).

The remainder of the paper is organized as follows. In the
preliminary Section 2, we set up notations, definitions and results
needed further. In Section 3, we introduce and investigate minimal
representations of a class of finite dimensional quotient modules of
$H^2(\mathbb{D}^n)$. Section 4 is devoted to the minimal
representation of a zero-based quotient module of
$H^2(\mathbb{D}^n)$. In Section 5, the theory of Sections 3 and 4 is
applied to obtain the main results of this paper. In the last
section, we give some (counter-) examples of Rudin's quotient
module of $H^2(\mathbb{D}^2)$.

\newsection{Preliminaries and preparatory results}

In this section we gather some facts concerning quotient modules of
$H^2(\D)$. We begin with the result that any quotient module of
$H^2(\D)$ is singly generated.

\begin{Proposition}\label{*-one}
For an inner function $\vp$, let $\clq_{\vp}=H^2(\D)\ominus \vp H^2(\D)$
be a quotient module of $H^2(\D)$. Then $M_z^* \vp$
is a star-cyclic vector of $\clq_{\vp}$.
\end{Proposition}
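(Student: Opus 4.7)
The plan is to first check that $M_z^*\varphi$ actually lies in $\mathcal{Q}_\varphi$, which is immediate: for any $f\in H^2(\D)$,
$\langle M_z^*\varphi,\varphi f\rangle=\langle \varphi,z\varphi f\rangle=\langle 1,zf\rangle=0$,
using that $\varphi$ is inner. So $M_z^*\varphi\perp \varphi H^2(\D)=\mathcal{S}_\varphi$.

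Next I would let $\mathcal{N}:=[M_z^*\varphi]_{M_z^*|_{\mathcal{Q}_\varphi}}$ denote the smallest closed $M_z^*$-invariant subspace of $\mathcal{Q}_\varphi$ (equivalently, of $H^2(\D)$) containing $M_z^*\varphi$, and aim to show $\mathcal{N}=\mathcal{Q}_\varphi$. Since $\mathcal{N}$ is $M_z^*$-invariant in $H^2(\D)$, its orthogonal complement in $H^2(\D)$ is $M_z$-invariant, and Beurling's theorem gives an inner function $\psi$ with $H^2(\D)\ominus\mathcal{N}=\psi H^2(\D)$, i.e., $\mathcal{N}=\mathcal{Q}_\psi$. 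The inclusion $\mathcal{N}\subseteq \mathcal{Q}_\varphi$ translates to $\varphi H^2(\D)\subseteq \psi H^2(\D)$, which means $\psi$ divides $\varphi$ in the inner function sense; so write $\varphi=\psi\theta$ for some inner $\theta$.

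It remains to show that $\theta$ is a unimodular constant, and this is the real content of the proof. The containment $M_z^*\varphi\in\mathcal{Q}_\psi$ yields $M_z^*\varphi\perp z^k\psi$ for every $k\geq 0$. Writing this out on the boundary,
\[
0=\langle M_z^*\varphi, z^k\psi\rangle=\langle \varphi, z^{k+1}\psi\rangle=\langle \psi\theta, z^{k+1}\psi\rangle=\langle \theta, z^{k+1}\rangle=\hat{\theta}(k+1),
\]
where the third equality uses $|\psi|=1$ a.e.\ on $\mathbb{T}$. Thus all Fourier coefficients $\hat\theta(m)$ with $m\geq 1$ vanish, and since $\theta\in H^2(\D)$ this forces $\theta$ to be constant, hence a unimodular constant because it is inner. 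Consequently $\psi H^2(\D)=\varphi H^2(\D)$ and $\mathcal{N}=\mathcal{Q}_\psi=\mathcal{Q}_\varphi$, proving that $M_z^*\varphi$ is star-cyclic.

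The main obstacle is the last paragraph, i.e., extracting a usable identity from the orthogonality relations $M_z^*\varphi\perp z^k\psi$. Everything else is essentially bookkeeping via Beurling. The key trick is to exploit $|\psi|=1$ on the boundary to cancel the common $\psi$ factor inside the inner product and reduce the statement to one about the Fourier coefficients of $\theta$ alone.
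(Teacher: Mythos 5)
Your proof is correct and follows essentially the same route as the paper's: verify $M_z^*\vp\in\clq_\vp$, invoke Beurling to identify the $M_z^*$-cyclic subspace as $\clq_\eta$ for some inner divisor $\eta$ of $\vp$, and then use the orthogonality together with $|\eta|=1$ a.e.\ on $\mathbb{T}$ to show the quotient inner function $\vp/\eta$ has vanishing positive Fourier coefficients, hence is constant. The only differences are cosmetic (your $\psi$ is the paper's $\theta$ and vice versa, and you argue directly while the paper phrases the last step as a contradiction).
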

\begin{proof}
Since $M_{\vp}^* M_z^* = M_z^* M_{\vp}^*$, for all $m \geq 0$ we
have
\[\langle M_z^* \vp, \vp z^m \rangle = \langle M_{\vp}^* M_z^* \vp,
z^m \rangle = \langle M_z^* M_{\vp}^* M_{\vp} 1, z^m \rangle =
\langle M_z^* 1, z^m \rangle = 0.\] From this it follows that $M_z^*
\vp \in \clq_{\vp}$. Then there exists an inner function $\theta$
such that  \[\clq_{\theta} = \mathop{\bigvee}_{m
\geq 0} M_z^{*m} (M_z^* \vp) \subseteq \clq_{\vp}.\]
Now if the above inclusion is proper, then
 $\vp = \theta \psi$
for some non-constant inner function $\psi$. On the other hand,
$\theta \perp \clq_{\theta} = \mathop{\bigvee}_{m \geq 0} M_z^{*m}
(M_z^* \vp)$ implies that
\[0 = \langle M_z^{*m} \vp, \theta \rangle = \langle M_z^{*m} \theta \psi, \theta \rangle
= \langle \psi, z^m \rangle,\]for all $m \geq 1$, which is a contradiction as  $\psi$ is a
non-constant inner function. Therefore $\q_{\theta} = \q_{\vp}$ and the proof follows.
\end{proof}

The following well-known result furnish a rich supply of star-cyclic
vectors for a quotient module of $H^2(\D)$.

\begin{Proposition}\label{mainlemma2}
Let $\varphi$ and $\psi$ be two non-constant inner functions and $q$
be the greatest common inner factor of $\varphi$ and $\psi$. Let
$\theta=\varphi/q$ and $f$ be a star-cyclic vector of
$\q_{\varphi}$. Then $M_{\psi}^*f$ is also a star-cyclic vector of
$\q_{\theta}$.
\end{Proposition}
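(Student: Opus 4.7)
The plan is to invoke Beurling's theorem together with the star-cyclicity hypothesis on $f$. Set $\clk := \bigvee_{m\geq 0} M_z^{*m}(M_{\psi}^* f)$. This is a closed $M_z^*$-invariant subspace of $H^2(\D)$, so by Beurling's theorem $\clk=\clq_{\theta'}$ for some inner function $\theta'$. The goal is to prove $\theta'=\theta$ (up to a unimodular constant).

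First I would show $\clk \subseteq \clq_{\theta}$, i.e., $M_{\psi}^* f \in \clq_{\theta}$. Factor $\psi=q\psi'$ and $\varphi = q\theta$; then for every $h \in H^2(\D)$,
\[
\langle M_{\psi}^* f,\, \theta h\rangle = \langle f,\, \psi \theta h\rangle = \langle f,\, \psi' \varphi h\rangle = 0,
\]
since $\psi'\varphi h \in \varphi H^2(\D)$ and $f\in \clq_\varphi$. Hence $\theta'$ divides $\theta$; write $\theta = \theta'\tau$ for an inner function $\tau$, and assume toward a contradiction that $\tau$ is non-constant.

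Since $\theta' \in (\clq_{\theta'})^\perp = \clk^\perp$, we get $\langle M_z^{*m} M_\psi^* f,\theta'\rangle = 0$ for all $m\geq 0$, which unravels to
\[
\langle f,\, \psi\theta' z^m\rangle = 0 \qquad (m\geq 0),
\]
i.e., $f\perp \psi\theta' H^2(\D)$. Combined with $f \perp \varphi H^2(\D)$, this gives $f\perp \psi\theta' H^2(\D) \vee \varphi H^2(\D)$. Now comes the main computation: in $H^2(\D)$ one has $u H^2(\D)\vee v H^2(\D) = \gcd(u,v)\, H^2(\D)$ for inner $u,v$. Apply this with $u=\psi\theta' = q\psi'\theta'$ and $v=\varphi=q\theta'\tau$; since $q=\gcd(\psi,\varphi)$ one easily checks $\gcd(\psi',\theta)=1$, hence $\gcd(\psi',\tau)=1$ because $\tau \mid \theta$. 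Therefore $\gcd(\psi\theta',\varphi)=q\theta'$, and we conclude $f \perp q\theta' H^2(\D)$, i.e., $f \in \clq_{q\theta'}$.

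But $\clq_{q\theta'}$ is $M_z^*$-invariant and is properly contained in $\clq_{q\theta}=\clq_\varphi$ (since $\tau$ is non-constant), so $\bigvee_{m\geq 0} M_z^{*m} f \subseteq \clq_{q\theta'} \subsetneq \clq_\varphi$, contradicting the assumption that $f$ is star-cyclic in $\clq_\varphi$. Hence $\tau$ is constant and $\clk=\clq_\theta$, as required. The main obstacle is the gcd identification in the third paragraph: one must transfer the coprimality encoded by $q=\gcd(\psi,\varphi)$ down to $\gcd(\psi',\tau)=1$ in order to cancel $\gcd(\psi',\tau)$ from the join.
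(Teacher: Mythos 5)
Your proof is correct, and it takes a genuinely different route from the paper's. The paper argues directly: it shows $M_\psi^* f \in \q_\theta$, then takes $g \in \q_\theta$ orthogonal to $[M_\psi^* f]_{M_z^*}$ and observes that $\psi g \perp \q_\varphi$, hence $\psi g \in \psi H^2 \cap \varphi H^2 = \psi\theta H^2$ (the lcm identity for intersections of Beurling subspaces), from which $g \in \theta H^2 \cap \q_\theta = \{0\}$. You instead proceed by contradiction using the dual gcd identity for \emph{joins} of Beurling subspaces: you identify $[M_\psi^* f]_{M_z^*} = \q_{\theta'}$ via Beurling, assume $\theta' \ne \theta$, and combine $f \perp \psi\theta' H^2$ with $f \perp \varphi H^2$ to land $f$ in the smaller quotient module $\q_{q\theta'} \subsetneq \q_\varphi$, contradicting star-cyclicity. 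The two arguments are essentially dual: both hinge on $\gcd(\psi', \theta) = 1$ (equivalently that $q$ is the \emph{greatest} common factor), but the paper's version uses $\psi H^2 \cap \varphi H^2 = \mathrm{lcm}(\psi,\varphi) H^2$ while yours uses $\psi\theta' H^2 \vee \varphi H^2 = \gcd(\psi\theta',\varphi) H^2$ together with the cancellation $\gcd(ac,bc) = c\gcd(a,b)$. The paper's direct route is slightly shorter since it avoids naming $\theta'$; your route makes the Beurling structure of the target subspace explicit, which some may find more transparent, at the cost of an extra step verifying $\gcd(\psi\theta', \varphi) = q\theta'$.
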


\begin{proof}
Since $f\in\q_{\varphi}$, we have
\[
 \langle M_{\psi}^*f, \theta z^m\rangle=\langle f, \varphi (\psi/q)z^m\rangle
 =0. \quad \quad (m \ge 0)
\]
Thus it follows that $M_{\psi}^*f\in \q_{\theta}$ and
$\bigvee\limits_{m=0}^{\infty}M_z^{*m}M_{\psi}^*f\subseteq
\q_{\theta}$. We need to show $ \q_{\theta}\subseteq
\bigvee\limits_{m=0}^{\infty}M_z^{*m}M_{\psi}^*f$. Now for $g \in
\q_{\theta}$ and $g \perp
\bigvee\limits_{m=0}^{\infty}M_z^{*m}M_{\psi}^*f$, we have
\[\psi g\perp \bigvee\limits_{m=0}^{\infty}M_z^{*m}f=\q_{\varphi},\]
and therefore $\psi g\in \psi H^2(\D)\cap \theta H^2(\D)=\psi \theta
H^2(\D)$. Consequently $\psi g= \psi \theta h$ for some $h\in H^2(\D)$.
Thus
$g= \theta h$ and together with $g\in\q_{\theta}$ imply that $g=0$.
The proof is complete.
\end{proof}


A pair of non-constant inner functions $\varphi$ and $\psi$ is said
to be \emph{relatively prime} if $\vp$ and $\psi$ do not have any
common non-constant inner factor. An immediate consequence of
the above proposition is as follows:

\begin{Corollary}\label{mainlemma1}
Let $\vp$ and $\psi$ be two relatively prime inner functions and $f$
be a star-cyclic vector of $\clq_{\vp}$. Then $M_{\psi}^*f$ is also
a star-cyclic vector of $\q_{\varphi}$.
\end{Corollary}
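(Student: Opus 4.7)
The plan is to derive this corollary as a direct specialization of Proposition \ref{mainlemma2}. Since $\varphi$ and $\psi$ are relatively prime, by definition they share no non-constant common inner factor, which means the greatest common inner divisor $q$ of $\varphi$ and $\psi$ is a constant (unimodular) inner function. Consequently, the quotient $\theta := \varphi/q$ coincides with $\varphi$ up to a unimodular constant, so $\clq_\theta = \clq_\varphi$.

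With this observation in hand, I would simply apply Proposition \ref{mainlemma2} to $\varphi$, $\psi$, and the star-cyclic vector $f \in \clq_\varphi$. The proposition delivers that $M_\psi^* f$ is a star-cyclic vector of $\clq_\theta$, which by the identification $\clq_\theta = \clq_\varphi$ is precisely the desired conclusion.

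The only subtlety worth noting is verifying that the hypothesis ``$\psi$ non-constant'' in Proposition \ref{mainlemma2} is compatible with the notion of ``relatively prime'' used here. The definition given in the paper requires both $\varphi$ and $\psi$ to be non-constant, so this is automatic. There is no real obstacle — the corollary is essentially a bookkeeping step that isolates the most commonly used special case ($q$ trivial) of the preceding proposition.
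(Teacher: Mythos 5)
Your proposal is correct and is exactly the route the paper intends: the corollary is stated as an immediate consequence of Proposition \ref{mainlemma2}, and your observation that relative primeness forces $q$ to be a unimodular constant, so that $\clq_\theta = \clq_\varphi$, is precisely the specialization the authors have in mind.
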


We now specialize to the case where $\varphi$ is a \textit{Blaschke
product}, that is,
\begin{equation}\label{Blas-def}\varphi=\prod_{m
=1}^{\infty}b_{\alpha_m}^{l_m},\end{equation}where
$\{l_m\}_{m=1}^{\infty}$ is a sequence of natural numbers and
$\{\alpha_m\}_{m =1}^{\infty} \subseteq \D$ is a sequence of
distinct scalars satisfying \[\sum_{m = 1}^{\infty}(1 -
l_m|\alpha_m|)<\infty.\]

Let $\varphi \in H^\infty(\D)$ be a Blaschke product, and let
${\mathcal{I}}_{\varphi}$ denote the set of all relatively prime
inner factors of $\varphi$ such that
\begin{equation}\label{relatively prime}
\varphi=\prod_{\xi\in \I_{\varphi}}\xi.
\end{equation} Note that $\I_{\varphi}$ is a
countable set and contains non-constant inner functions. Moreover,
for each $\xi \in \I_{\vp}$, there exists a unique prime inner
function $P(\xi)$ and an integer $m \in \mathbb{N}\setminus \{0\}$ such that
\[\xi = P(\xi)^m.\] In particular if $\vp$ is of the form ~\eqref{Blas-def}, then
\[\I_{\varphi}=\{b_{\alpha_m}^{l_m}: m \ge 1\},\] and
$P(b_{\alpha_m}^{l_m})= b_{\alpha_m}$ for all $m \ge 1$.

The following result relates the aspect of relatively prime factors
of a given inner function $\vp$ to the corresponding quotient module
$\q_{\vp}$.

\begin{Lemma}
Let $\varphi$ be a Blaschke product. Then
\[
\q_{\varphi}= \bigvee_{\xi\in\I_{\varphi}}\q_{\xi}.
\]
\end{Lemma}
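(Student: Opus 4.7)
The plan is to prove the two inclusions separately. The inclusion $\bigvee_{\xi \in \I_{\varphi}} \clq_{\xi} \subseteq \clq_{\varphi}$ is immediate from divisibility: each $\xi \in \I_{\varphi}$ satisfies $\xi \mid \varphi$, so $\varphi H^2(\D) \subseteq \xi H^2(\D)$, and taking orthogonal complements inside $H^2(\D)$ gives $\clq_{\xi} \subseteq \clq_{\varphi}$.

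For the reverse inclusion, I would pass to orthogonal complements inside $H^2(\D)$ to reduce the statement to
\[
\bigcap_{\xi \in \I_{\varphi}} \xi H^2(\D) \;=\; \varphi H^2(\D).
\]
The inclusion $\supseteq$ is again clear. For $\subseteq$, enumerate $\I_{\varphi}=\{\xi_i\}_{i\geq 1}$ and set $\varphi_N:=\xi_1\xi_2\cdots \xi_N$. For two relatively prime inner functions $\eta_1,\eta_2$ one has $\eta_1 H^2(\D)\cap \eta_2 H^2(\D)=\eta_1\eta_2 H^2(\D)$ (since $\eta_1\mid \eta_2 g$ together with $\gcd(\eta_1,\eta_2)=1$ forces $\eta_1\mid g$), and this extends by induction on $N$ to $\bigcap_{i=1}^N \xi_i H^2(\D)=\varphi_N H^2(\D)$, using that $\xi_{N+1}$ is relatively prime to $\varphi_N$. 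Hence any $f$ in the infinite intersection lies in $\varphi_N H^2(\D)$ for every $N$.

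The main obstacle is then the passage from the finite case to the countably infinite intersection, i.e.\ showing $\bigcap_{N}\varphi_N H^2(\D)\subseteq \varphi H^2(\D)$. For such an $f$, write $f=\varphi_N h_N$ with $h_N\in H^2(\D)$; since $\varphi_N$ is inner, $\|h_N\|=\|f\|$, so $\{h_N\}$ is norm-bounded and admits a subsequence converging weakly to some $h\in H^2(\D)$. Using that the partial products $\varphi_N$ of the Blaschke product $\varphi$ converge to $\varphi$ pointwise on $\D$, together with the fact that weak convergence in $H^2(\D)$ implies pointwise convergence via the reproducing kernel, one identifies $h$ with $f/\varphi$ off the zero set of $\varphi$. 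Thus $f=\varphi h\in \varphi H^2(\D)$, which completes the reverse inclusion and finishes the proof. Alternatively, the same conclusion can be phrased purely in terms of inner--outer factorisation: each $\xi_i$ divides the inner factor of $f$, and since $\varphi$ is the least common multiple of $\{\xi_i\}$ by hypothesis, $\varphi$ itself divides that inner factor.
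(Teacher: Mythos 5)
Your proof is correct and follows essentially the same route as the paper: both handle the easy inclusion by divisibility and then pass to orthogonal complements, reducing the reverse inclusion to the identity $\bigcap_{\xi\in\I_\varphi}\xi H^2(\D)=\varphi H^2(\D)$. The paper simply asserts this intersection identity as a consequence of the factors being pairwise relatively prime, whereas you flesh it out (first for finite subproducts by induction, then passing to the limit via weak compactness and pointwise convergence of the partial Blaschke products, or equivalently via the inner--outer factorisation), but the underlying argument is the same.
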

\begin{proof} Since $\varphi = \prod\limits_{\xi\in\I_{\varphi}}\xi$,
it follows that $\varphi H^2(\D)\subseteq \xi H^2(\D)$ for all
$\xi\in \I_{\varphi}$. This implies that $\q_{\xi}\subseteq
\q_{\varphi}$ for all $\xi\in\I_{\varphi}$, and therefore
$\bigvee_{\xi\in\I_{\varphi}}\q_{\xi} \subseteq \q_{\varphi}$.

\noindent We now proceed to prove the other inclusion. Let $f\in
\left(\bigvee_{\xi\in\I_{\varphi}}\q_{\xi}\right)^{\perp}$. Then $
f\in \q_{\xi}^{\perp}=\xi H^2(\D)$ for all $\xi\in\I_{\varphi}$,
that is, $f\in \mathop{\bigcap}_{\xi\in\I_{\varphi}}\xi H^2(\D)$.
Since $\I_{\varphi}$ contains only relatively prime Blaschke
products, it follows that
\[f\in \mathop{\bigcap}_{\xi\in\I_{\varphi}}\xi H^2(\D) =
\big(\prod\limits_{\xi\in\I_{\varphi}}\xi \big) H^2(\D) = \varphi
H^2(\D)=\q_{\varphi}^{\perp}.
\]
This completes the proof.
\end{proof}

As a corollary, we obtain the following useful fact for tensor
product of quotient modules:

\begin{Corollary} \label{maincor3}
Let $\{\varphi_j\}_{j=1}^n$ be a collection of Blaschke products.
Then
\[
\q_{\varphi_1}\otimes \cdots \otimes \q_{\varphi_n}
=\bigvee_{(\xi_1,\dots,\xi_n)\in
\I_{\varphi_1}\times\cdots\times\I_{\varphi_n}} \q_{\xi_{1}}\otimes
 \cdots \otimes \q_{\xi_{n}}.
\]
\end{Corollary}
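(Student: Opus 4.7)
The plan is to derive this corollary by combining the preceding lemma, applied to each $\varphi_i$ separately, with the standard distributivity of the Hilbert tensor product over closed linear spans. So first I would note that by the lemma just proved,
\[
\q_{\varphi_i}=\bigvee_{\xi_i\in\I_{\varphi_i}}\q_{\xi_i}\qquad(i=1,\dots,n),
\]
and the corollary will then be an immediate consequence of the identity
\[
\Bigl(\bigvee_{\alpha_1}\clm_1^{(\alpha_1)}\Bigr)\ot\cdots\ot\Bigl(\bigvee_{\alpha_n}\clm_n^{(\alpha_n)}\Bigr)
=\bigvee_{(\alpha_1,\dots,\alpha_n)}\clm_1^{(\alpha_1)}\ot\cdots\ot\clm_n^{(\alpha_n)},
\]
valid for families of closed subspaces of Hilbert spaces.

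For the second step I would either quote this distributive identity as a standard fact about Hilbert tensor products or prove it by induction on $n$, reducing to the bilinear case. In the bilinear case it is clear that each simple tensor $h_1\ot h_2$ with $h_i\in\bigvee_{\alpha_i}\clm_i^{(\alpha_i)}$ lies in the closed linear span on the right (approximate each $h_i$ by finite sums from $\clm_i^{(\alpha_i)}$'s and use bilinearity of $\ot$), and since such simple tensors are dense in the left side, the inclusion $\subseteq$ follows; the reverse inclusion is trivial.

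Alternatively, to keep the proof self-contained, I would mimic the orthogonality argument used for the lemma directly in the tensor-product setting: take $f\in\bigl(\bigvee_{(\xi_1,\dots,\xi_n)}\q_{\xi_1}\ot\cdots\ot\q_{\xi_n}\bigr)^{\perp}$, expand the orthogonality condition using $\q_{\xi_i}^{\perp}=\xi_i H^2(\D)$ and the fact that orthogonality of a single tensor to a product of subspaces factors, and conclude as in the lemma that $f$ belongs to $\bigcap_{\xi_i\in\I_{\varphi_i}}\xi_1 H^2(\D)\ot\cdots\ot \xi_n H^2(\D)=\varphi_1 H^2(\D)\ot\cdots\ot\varphi_n H^2(\D)=(\q_{\varphi_1}\ot\cdots\ot\q_{\varphi_n})^{\perp}$, using the relative primality of the factors in each $\I_{\varphi_i}$.

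The main point requiring care is the distributivity step: one must be sure that intersecting tensor products of the form $\bigcap_i \xi_1^{(i)}H^2(\D)\ot\cdots\ot\xi_n^{(i)}H^2(\D)$ collapses to the product of the individual intersections, which is where the relative primality hypothesis (already exploited in the single-variable lemma) enters through the one-variable identity $\bigcap_{\xi\in\I_\vp}\xi H^2(\D)=\varphi H^2(\D)$. Everything else is bookkeeping, and the proof should be only a few lines.
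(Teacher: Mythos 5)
Your primary route (apply the preceding lemma coordinatewise to get $\q_{\varphi_i}=\bigvee_{\xi_i\in\I_{\varphi_i}}\q_{\xi_i}$, then invoke distributivity of the Hilbert tensor product over closed linear spans) is correct, and it is exactly why the paper records this as a corollary with no further argument.

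The ``alternative'' orthogonality argument you sketch, however, has a genuine flaw. You write that $f\perp\q_{\xi_1}\ot\cdots\ot\q_{\xi_n}$ should force $f\in\xi_1H^2(\D)\ot\cdots\ot\xi_nH^2(\D)$, implicitly using
\[
(\q_{\xi_1}\ot\cdots\ot\q_{\xi_n})^{\perp}\stackrel{?}{=}\cls_{\xi_1}\ot\cdots\ot\cls_{\xi_n},
\]
but this identity is false: the orthogonal complement of a tensor product of subspaces is not the tensor product of the orthogonal complements. Already for $n=2$, $(\q_{\xi_1}\ot\q_{\xi_2})^{\perp}$ contains $\cls_{\xi_1}\ot\q_{\xi_2}$ and $\q_{\xi_1}\ot\cls_{\xi_2}$, neither of which sits inside $\cls_{\xi_1}\ot\cls_{\xi_2}$. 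So that computation does not go through, and your closing paragraph misdiagnoses where relative primality enters: in the correct argument it is used only inside the one-variable lemma, while the distributivity step is a purely Hilbert-space fact with no analytic input. None of this damages the result, since the first route you propose is sound and self-contained; you should simply drop or repair the alternative.
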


The following lemmas are both simple and useful.

\begin{Lemma}
\label{lemma 1} Let $\{\xi_i\}_{i=1}^n$ and $\{\eta_i\}_{i=1}^n$ be
inner functions such that $\xi_j|\eta_j$ for some $1\le j \le n$.
Then
 \[
M_{\eta_1}^*\ot\cdots\ot
M_{\eta_n}^*(\q_{\xi_1}\ot\cdots\ot\q_{\xi_n})=\{0\}.
 \]
\end{Lemma}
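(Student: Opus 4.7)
The plan is to exploit the divisibility $\xi_j \mid \eta_j$ to factor the adjoint operator $M_{\eta_j}^{*}$ through $M_{\xi_j}^{*}$, and then observe that $M_{\xi_j}^{*}$ kills $\q_{\xi_j}$. This reduces the lemma to a one-variable computation, after which the tensor-product structure does the rest.

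First, since $\xi_j \mid \eta_j$, I would write $\eta_j = \xi_j \psi_j$ for some inner function $\psi_j \in H^\infty(\D)$. Then $M_{\eta_j} = M_{\xi_j} M_{\psi_j}$, and taking adjoints gives $M_{\eta_j}^{*} = M_{\psi_j}^{*} M_{\xi_j}^{*}$. Next, I would verify that $M_{\xi_j}^{*}$ annihilates $\q_{\xi_j}$: for any $f_j \in \q_{\xi_j}$ and any $g \in H^2(\D)$, since $f_j \perp \xi_j H^2(\D)$ one has
\[
\langle M_{\xi_j}^{*} f_j, g \rangle = \langle f_j, \xi_j g\rangle = 0,
\]
so $M_{\xi_j}^{*} f_j = 0$ and in particular $M_{\eta_j}^{*} f_j = M_{\psi_j}^{*}(M_{\xi_j}^{*} f_j) = 0$.

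Finally, I would pass from the single-factor statement to the tensor product. On an elementary tensor $f_1 \otimes \cdots \otimes f_n$ with each $f_i \in \q_{\xi_i}$, the operator acts factor-wise,
\[
(M_{\eta_1}^{*} \otimes \cdots \otimes M_{\eta_n}^{*})(f_1 \otimes \cdots \otimes f_n) = M_{\eta_1}^{*} f_1 \otimes \cdots \otimes M_{\eta_n}^{*} f_n,
\]
and the $j$-th factor vanishes by the previous step, so the whole tensor is zero. By linearity and boundedness (continuity) of the tensor product of operators, the image of the closed linear span $\q_{\xi_1} \otimes \cdots \otimes \q_{\xi_n}$ is also $\{0\}$, as required.

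There is essentially no serious obstacle here; the only point to be careful about is the order of factors when taking the adjoint of $M_{\eta_j} = M_{\xi_j} M_{\psi_j}$ (so $M_{\eta_j}^{*} = M_{\psi_j}^{*} M_{\xi_j}^{*}$, with $M_{\xi_j}^{*}$ applied first), which is exactly what makes $M_{\xi_j}^{*}$ available to annihilate $\q_{\xi_j}$.
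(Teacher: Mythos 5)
Your proof is correct and takes essentially the same route as the paper: both reduce to the elementary fact that, because $\xi_j \mid \eta_j$, the compressed adjoint $M_{\eta_j}^{*}$ annihilates $\q_{\xi_j}$, and then extend across the tensor product. The paper phrases this via the inclusion $\q_{\xi_j} \subseteq \q_{\eta_j}$ together with $M_{\eta_j}^{*}(\q_{\eta_j}) = \{0\}$, while you factor $M_{\eta_j}^{*} = M_{\psi_j}^{*} M_{\xi_j}^{*}$ and use $M_{\xi_j}^{*}(\q_{\xi_j}) = \{0\}$; these are two equivalent ways of saying the same thing.
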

\begin{proof}
Let $1\le j \le n$ be such that $\xi_j|\eta_j$. Since
$\cls_{\xi_j}\supseteq \cls_{\eta_j}$, we have $\q_{\xi_j}\subseteq
\q_{\eta_j}$. Then the proof follows from the fact that
$M_{\eta_j}^*(\q_{\eta_j})=\{0\}$.
\end{proof}

\begin{Lemma}\label{lemma 2}
Let $T=(T_1,\dots,T_n)$ be a commuting tuple of operators on a
Hilbert space $\Hil$, and let $\q$ be a joint $T^*$-invariant closed
subspace of $\Hil$. Then
\[\mbox{rank~}P_{\q}T\vert_{\q} \leq \mbox{rank~}T,\]
where $P_{\q}T\vert_{\q} : = (P_{\q}T_1 \vert_{\q}, \ldots,
P_{\q}T_n \vert_{\q})$.
\end{Lemma}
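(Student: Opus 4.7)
The plan is to show that projecting any $T$-generating subset of $\Hil$ onto $\q$ yields a $P_{\q}T|_{\q}$-generating subset of $\q$, which immediately gives the desired rank inequality upon taking infima over cardinalities.

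The first step is to extract the consequences of $T^*$-invariance of $\q$: this is equivalent to $T$-invariance of $\q^{\perp}$, which means $T_i P_{\q^{\perp}} = P_{\q^{\perp}} T_i P_{\q^{\perp}}$ for each $i$. Taking orthogonal complements, I would deduce the key identity
\[
P_{\q} T_i = P_{\q} T_i P_{\q} \quad \text{as operators on } \Hil, \quad (i = 1, \ldots, n)
\]
since $T_i$ maps $\q^{\perp}$ into $\q^{\perp}$ so $P_{\q}T_i P_{\q^{\perp}} = 0$.

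Next, I would promote this to polynomials. A short induction on degree, using the identity above at each step, gives
\[
P_{\q}\, p(T_1, \ldots, T_n)\, h \;=\; p\bigl(P_{\q}T_1|_{\q}, \ldots, P_{\q}T_n|_{\q}\bigr)\, P_{\q} h,
\]
for every $p \in \mathbb{C}[\z]$ and every $h \in \Hil$. (The commuting tuple $(P_{\q}T_i|_{\q})$ makes the right-hand side well defined, and commutativity is inherited from $T$.) This is the main technical content; I expect this to be the only step requiring any care, though it is routine once the identity $P_{\q}T_i = P_{\q}T_i P_{\q}$ is in hand.

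Finally, let $S \subseteq \Hil$ be any $T$-generating set, so $[S]_T = \Hil$. For any $k \in \q$, write $k = P_{\q} k$ and approximate $k$ by finite sums $\sum_j p_j(T) h_j$ with $h_j \in S$. Applying $P_{\q}$ and invoking the displayed identity, each such sum becomes $\sum_j p_j(P_{\q}T|_{\q}) P_{\q} h_j$, which lies in $[P_{\q}(S)]_{P_{\q}T|_{\q}}$. Since $P_{\q}$ is continuous, the limit $k$ lies there as well, so $\q = [P_{\q}(S)]_{P_{\q}T|_{\q}}$. Thus $\#\, P_{\q}(S) \le \# S$ bounds $\mathrm{rank}\, P_{\q}T|_{\q}$, and taking the infimum over all $T$-generating sets $S$ yields $\mathrm{rank}\, P_{\q}T|_{\q} \le \mathrm{rank}\, T$.
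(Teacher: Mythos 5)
Your proof is correct and takes essentially the same approach as the paper's: both hinge on the identity $P_{\q}T_i P_{\q} = P_{\q}T_i$ (a consequence of $T^*$-invariance of $\q$), propagate it to monomials/polynomials in $T$, and then observe that projecting a $T$-generating set onto $\q$ produces a $P_{\q}T|_{\q}$-generating set. The only cosmetic difference is that the paper fixes a finite generating set and treats the infinite-rank case separately, whereas you argue directly with an arbitrary generating set and take the infimum at the end.
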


\begin{proof}
If $\mbox{rank~} T=\infty$, then there is nothing to prove. So, let
$\{f_1,\dots,f_m\} \subseteq \clh$ be a $T$-generating set for some
$m \in \mathbb{N}$. Since $P_{\q} T_j P_{\q} = P_{\q} T_j$ for all
$j = 1, \ldots, n$, we have \[(P_{\q} T P_{\q})^{\bm{k}} (P_{\q}
f_l) = P_{\q} T^{\bm{k}} P_{\q} f_l = P_{\q} (T^{\bm{k}} f_l),\]for
all $l = 1, \ldots, m$ and ${\bm{k}} \in \mathbb{N}^n$. On the other
hand, since $\bigvee \{ T^{\bm{k}} f_j : {\bm{k}} \in \mathbb{N}^n,
j = 1, \ldots, m\} = \clh$, we have $\bigvee \{ (P_{\q} T
P_{\q})^{\bm{k}} (P_{\q} f_j) : {\bm{k}} \in \mathbb{N}^n, j = 1,
\ldots, m\} = \q$. This shows in particular that
$\{P_{\q}f_1,\dots,P_{\q}f_m\}$ is a
$P_{\q}T\vert_{\q}$-generating subset of $\q$. This completes the
proof.
\end{proof}

\newsection{Co-ranks of finite dimensional quotient modules}\label{sec:2}

In this section we determine co-ranks of some finite dimensional
quotient modules of $H^2(\D^n)$. This will be particularly useful in
the next section when we consider minimal representations of
quotient modules.

Before proceeding further, we find more useful descriptions of
finite dimensional quotient modules of $H^2(\D^n)$. Recall that a
quotient module $\q_{\vp}$ is finite dimensional if and only if
$\vp$ is a finite Blaschke product, which is unique up to the circle
group $\mathbb{T}$, and $\mbox{order} \vp = \mbox{dim~} \clq_{\vp}$.
Here our main interest concern the case of $\vp = b_{\alpha}^m$,
where $\alpha \in \D$ and $m \in \mathbb{N}$. We first observe that
for $\alpha \in \D$ and $m \geq 1$, $\{b_{\alpha}^j M_z^*
b_{\alpha}\}_{j=0}^{m-1}$ is an orthogonal basis of the quotient
module $\clq_{b_{\alpha}^m}$. A simple calculation reveals that
\[M_z^* b_{\alpha} = (1 - |\alpha|^2) \mathbb{S}(\cdot,
\alpha),\]where $\mathbb{S}(\cdot, \alpha)$ is the Szeg\"{o} kernel
on $\D$ defined by \[\mathbb{S}(\cdot, \alpha)(z) = (1 -
\bar{\alpha} z)^{-1}. \quad \quad (z \in \D)\] Since
$M_{b_{\alpha}^j} \in \clb(H^2(\D))$ is an isometry, we have
\begin{equation}\label{norm-1}\|b_{\alpha}^j M_z^* b_{\alpha}\| =
\|M_z^* b_{\alpha}\| = (1 - |\alpha|^2) \|\mathbb{S}(\cdot,
\alpha)\| = (1 - |\alpha|^2)^{\frac{1}{2}},\end{equation}for all $j
\in \mathbb{N}$. Obviously \[\langle b_{\alpha}^{m-1} M_z^*
b_{\alpha}, M_z^*(b_{\alpha}^{m-1} M_z^* b_{\alpha}) \rangle =
\langle M_z^* b_{\alpha}, M_z^{*2} b_{\alpha} \rangle = \bar{\alpha}
(1-|\alpha|^2)^2 \langle \mathbb{S}(\cdot, \alpha), \mathbb{S}(\cdot,
\alpha) \rangle = \bar{\alpha}(1-|\alpha|^2),\] which yields
\begin{equation}\label{P-1}
P_{\mathbb{C} (b_{\alpha}^{m-1} M_z^* b_{\alpha})} M_z^*
b_{\alpha}^{m-1} M_z^* b_{\alpha} = \bar{\alpha} (b_{\alpha}^{m-1}
M_z^* b_{\alpha}),
\end{equation}
where $m \geq 1$ and $P_{\mathbb{C}(b_{\alpha}^{m-1} M_z^*
b_{\alpha})}$ denotes the orthogonal projection of $H^2(\D)$ onto the
one dimensional subspace generated by the vector $b_{\alpha}^{m-1}
M_z^* b_{\alpha}$.

We next introduce a new class of quotient modules which is based on
submodules vanishing at a point of $\mathbb{D}^n$. Given
$\bm{\alpha} = (\alpha_1,\dots,\alpha_n)\in \D^n$ and a finite
subset $A$ of $(\Nat\setminus\{0\})^n$, let $\clq(\bm{\alpha}; A)$ be the quotient
module defined by
\begin{equation}\label{nset}
\q(\bm{\alpha}; A):= \bigvee_{(l_1,\dots,l_n)\in
A}\q_{b_{\alpha_1}^{l_{1}}}\otimes \cdots \otimes
\q_{b_{\alpha_n}^{l_{n}}}.
\end{equation}
Now we want to find a minimum cardinality subset $\tilde{A}$ of $A$
such that $\q(\bm{\alpha}; A)=\q(\bm{\alpha};\tilde{A})$. In order
to find $\tilde{A}$, first observe that for $(l_1,\dots,l_n),
(l_1',\dots,l_n')\in A$ if $l_{i}\le l_{i}'$ for all $i=1,\ldots,n$,
then
\begin{equation}\label{inc-red}\q_{b_{\alpha_1}^{l_{1}}}\otimes \cdots \otimes
\q_{b_{\alpha_n}^{l_{n}}} \subseteq
\q_{b_{\alpha_1}^{l_{1}'}}\otimes \cdots \otimes
\q_{b_{\alpha_n}^{l_{n}'}},\end{equation} which implies that
\begin{equation}\label{inc-red2}
(\q_{b_{\alpha_1}^{l_1}}\ot\cdots\ot\q_{b_{\alpha_n}^{l_n}}) \vee
(\q_{b_{\alpha_1}^{l_{1}'}}\otimes \cdots \otimes
\q_{b_{\alpha_n}^{l_{n}'}}) = \q_{b_{\alpha_1}^{l_{1}'}}\otimes
\cdots \otimes \q_{b_{\alpha_n}^{l_{n}'}}.\end{equation}
Thus
$\q(\bm{\alpha}; A) = \q(\bm{\alpha}; A \setminus \{(l_1, \ldots,
l_n)\})$, and hence we remove $(l_1,\dots,l_n)$ from $A$. By
continuing this process we eventually obtain a set $\tilde{A}
\subseteq A$ of minimal cardinality such that
\begin{equation}\label{mnset}
\q(\bm{\alpha}; A) = \q(\bm{\alpha}; \tilde{A}) =
\bigvee_{(l_1,\dots,l_n)\in
\tilde{A}}\q_{b_{\alpha_1}^{l_{1}}}\otimes \cdots \otimes
\q_{b_{\alpha_n}^{l_{n}}}.
\end{equation}

It should be noted that for any pair $(l_1,\dots,l_n)$ and
$(l_1',\dots,l_n')$ of $\tilde{A}$, one has the following:
\begin{equation}
\label{equivalent definition} \exists\  i, j\in \{1,\dots,n\}, i\neq
j, \text{ such that } l_i<l_i' \text{ and }
 l_j>l_j' .
\end{equation}
The new representation $\q(\bm{\alpha}; \tilde{A})$ is called the
\textit{minimal representation} of $\q(\bm{\alpha}; A)$.

We remark here that any finite dimensional quotient module of
$H^2(\D^n)$ is span closure of finite number of quotient modules of
the form $\q(\bm{\alpha};\tilde{A})$ (cf. \cite{AC}, also see
Douglas, Paulsen, Sah and Yan \cite{DPSY}, Guo \cite{Guo1, Guo2} and
Chen and Guo \cite{CG}).

This minimal representation of $\q(\bm{\alpha}; {A})$ plays a
fundamental role in calculating the co-rank of a Rudin's quotient
module $\q$. Here is one example.

\begin{Proposition} \label{mainlemma4}
Let $\q(\bm{\alpha}; A)$ be a quotient module as in (\ref{nset}).
Then
\[\mbox{co-rank}~\q(\bm{\alpha}; A) =
\mbox{co-rank~}\q(\bm{\alpha}; \tilde{A}) = \#\tilde{A}.\]
\end{Proposition}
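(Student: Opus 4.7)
The first equality is immediate from the reduction \eqref{inc-red}--\eqref{mnset}, which establishes $\q(\bm{\alpha}; A) = \q(\bm{\alpha}; \tilde{A})$ as subspaces. Writing $\mathcal{M} := \q(\bm{\alpha}; \tilde{A})$, the content of the proposition is the identity $\mbox{co-rank}~\mathcal{M} = \#\tilde{A}$.

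The upper bound is easy. By Proposition \ref{*-one}, $M_z^* b_{\alpha_i}^{l_i}$ is star-cyclic in $\q_{b_{\alpha_i}^{l_i}}$. Since a tensor of star-cyclic vectors is star-cyclic for the componentwise tuple $(M_{z_1}^*, \ldots, M_{z_n}^*)$, the vector
\[
f_{\bm{l}} := \bigotimes_{i=1}^n M_z^* b_{\alpha_i}^{l_i}
\]
star-generates $\q_{b_{\alpha_1}^{l_1}} \otimes \cdots \otimes \q_{b_{\alpha_n}^{l_n}}$ for every $\bm{l} = (l_1, \ldots, l_n) \in \tilde{A}$. Therefore $\{f_{\bm{l}}\}_{\bm{l} \in \tilde{A}}$ star-generates $\mathcal{M}$, giving $\mbox{co-rank}~\mathcal{M} \leq \#\tilde{A}$.

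For the lower bound I pass to the commuting \emph{nilpotent} shifts $T_i := M_{z_i}^*|_\mathcal{M} - \bar{\alpha}_i I$; nilpotency holds because $\bar{\alpha}_i$ is the only spectral value of $M_z^*|_{\q_{b_{\alpha_i}^m}}$. Since $\mathbb{C}[T_i] = \mathbb{C}[M_{z_i}^*|_\mathcal{M}]$, star-generating sets for $M_{\z}^*|_\mathcal{M}$ coincide with $T$-generating sets, and a standard Nakayama-style argument (iterate $\mathcal{M} = \mathcal{M}_0 + \sum_i T_i \mathcal{M}$ until the $T^{\bm{k}}$'s vanish by nilpotency) yields
\[
\mbox{co-rank}~\mathcal{M} = \dim\!\bigl(\mathcal{M} \big/ \textstyle\sum_{i=1}^n T_i \mathcal{M} \bigr).
\]
From \eqref{P-1} and the tensor structure one checks $\sum_i T_i \mathcal{M} = \q(\bm{\alpha}; \tilde{A}^-)$, where $\tilde{A}^- := \{(l_1, \ldots, l_i-1, \ldots, l_n) : \bm{l} \in \tilde{A},\ 1 \le i \le n\}$ (with the convention $\q_{b_{\alpha_i}^0} := \{0\}$).

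The final step exhibits $\#\tilde{A}$ vectors whose images in $\mathcal{M}/\q(\bm{\alpha}; \tilde{A}^-)$ are linearly independent, namely the \emph{corner vectors} $e_{\bm{l}} := \bigotimes_{i=1}^n b_{\alpha_i}^{l_i-1}M_z^* b_{\alpha_i}$ for $\bm{l} \in \tilde{A}$. These are mutually orthogonal by \eqref{norm-1}, and orthogonality of each $e_{\bm{l}}$ to every summand $\q_{b_{\alpha_1}^{l_1'}} \otimes \cdots \otimes \q_{b_{\alpha_i}^{l_i'-1}} \otimes \cdots \otimes \q_{b_{\alpha_n}^{l_n'}}$ of $\q(\bm{\alpha}; \tilde{A}^-)$ is where the minimality condition \eqref{equivalent definition} enters: when $\bm{l}' = \bm{l}$, the $i$-th factor of $e_{\bm{l}}$ is orthogonal to $\q_{b_{\alpha_i}^{l_i-1}}$ by the orthogonal basis used in \eqref{norm-1}; when $\bm{l}' \neq \bm{l}$, \eqref{equivalent definition} supplies an index $q$ with $l_q > l_q'$, so the $q$-th factor $b_{\alpha_q}^{l_q-1}M_z^* b_{\alpha_q}$ of $e_{\bm{l}}$ is orthogonal to $\q_{b_{\alpha_q}^{l_q'}}$ (or to $\q_{b_{\alpha_q}^{l_q'-1}}$ if $q = i$) because $l_q - 1 \geq l_q'$ exceeds the range $\{0, \ldots, l_q'-1\}$ of the orthogonal basis. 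This combinatorial use of minimality \eqref{equivalent definition} is the main obstacle; everything else reduces to a direct Hilbert-space computation or to the Nakayama step.
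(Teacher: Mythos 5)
Your proof is correct, and the upper-bound half is the paper's own argument (tensors of star-cyclic vectors). For the lower bound you take a genuinely different route. The paper uses the same corner vectors $g_k$, sets $\cle=\bigoplus_k\mathbb{C}g_k$, uses \eqref{norm-1}, \eqref{P-1} and \eqref{equivalent definition} to show the compressed tuple $P_\cle M_{z_i}^*|_\cle$ equals the scalar $\bar{\alpha}_iI_\cle$ (hence has rank $\dim\cle=\#\tilde{A}$), and then appeals to the compression-rank Lemma~\ref{lemma 2}; this requires that $\q(\bm{\alpha};\tilde{A})\ominus\cle$ be a quotient module, a co-invariance claim the paper asserts but does not verify (it needs the further computation $P_{\q(\bm{\alpha};\tilde{A})}M_{z_i}g_k=\alpha_ig_k$, which in turn relies on $\bigl(\bigotimes_{j\ne i}b_{\alpha_j}^{l_{j}-1}M_z^*b_{\alpha_j}\bigr)\otimes b_{\alpha_i}^{l_{i}}$ being orthogonal to $\q(\bm{\alpha};\tilde{A})$ by minimality). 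You replace this with a Nakayama-style count for the commuting nilpotent tuple $T_i=M_{z_i}^*|_\mathcal{M}-\bar{\alpha}_iI$: reduce the co-rank to $\dim\bigl(\mathcal{M}/\sum_iT_i\mathcal{M}\bigr)$, identify $\sum_iT_i\mathcal{M}=\q(\bm{\alpha};\tilde{A}^-)$, and show the corner vectors remain independent in the quotient, with \eqref{equivalent definition} entering at precisely the same combinatorial step. Both arguments pivot on the same orthogonality consequence of minimality. Your route avoids the co-invariance check and exposes the local-ring structure ($\mathcal{M}$ is a module over an Artinian local ring), giving the co-rank outright as a quotient dimension; its cost is the auxiliary fact $(M_z^*-\bar{\alpha})\q_{b_\alpha^m}=\q_{b_\alpha^{m-1}}$ needed to compute $\sum_iT_i\mathcal{M}$, which holds because $M_z^*|_{\q_{b_\alpha^m}}$ is a single Jordan block at $\bar{\alpha}$ (the $\bar{\alpha}$-eigenspace of $M_z^*$ on $H^2(\D)$ is the line through the Szeg\"{o} kernel, and the $M_z^*$-invariant subspaces of $\q_{b_\alpha^m}$ form the chain $\q_{b_\alpha^j}$) and does not follow from \eqref{P-1} alone as your sketch suggests. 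The paper's route instead reuses the general operator-theoretic Lemma~\ref{lemma 2} it has already established.
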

\begin{proof} Let $\#\tilde{A} = r$. Without loss of
generality we assume that $\tilde{A} =
\{(l_{1,k},l_{2,k},\dots,l_{n,k})\in\Nat^n: k=1,\dots, r\}$. Let
$f_{j,k}$ be a star-generator of $\q_{b_{\alpha_j}^{l_{j,k}}}$,
where $k=1,\dots, r$, and $j = 1, \ldots, n$. Then
\[
[f_{1,k}\otimes \cdots \otimes f_{n,k}]_{M_z^*} =
\q_{b_{\alpha_1}^{l_{1,k}}}\otimes \cdots \otimes
\q_{b_{\alpha_n}^{l_{n,k}}},
\]for all $k= 1, \ldots, r$, so that $\mbox{co-rank~} \q(\bm{\alpha}; \tilde{A})
\leq r$. The reverse inequality will follow, by
virtue of Lemma \ref{lemma 2}, if we can construct a closed subspace
$\cle\subseteq \q(\bm{\alpha}; \tilde{A})$ such that
$\q(\bm{\alpha}; \tilde{A})\ominus \cle$ is a quotient module of
$H^2(\D^n)$ and $\mbox{rank}~\cle=r$ for
$(P_{\cle}M_{z_1}^*|_{\cle},\dots,P_{\cle}M_{z_n}^*|_{\cle})$. To
this end, let \[ g_k:=
b_{\alpha_1}^{l_{1,k}-1}M_{z}^*b_{\alpha_1}\ot \cdots\ot
b_{\alpha_n}^{l_{n,k}-1}M_{z}^*b_{\alpha_n} \in
\q_{b_{\alpha_1}^{l_{1,k}}}\otimes \cdots \otimes
\q_{b_{\alpha_n}^{l_{n,k}}},
\]
for all $k = 1, \ldots,
r$. By virtue of \eqref{equivalent definition} we conclude that
$\{g_k\}_{k=1}^r$ is an orthogonal set, and hence
$\cle:=\bigoplus_{k=1}^r \Comp g_k$ is an $r$ dimensional subspace
of $\q(\bm{\alpha}, \tilde{A})$ and $\q(\bm{\alpha};
\tilde{A})\ominus \cle$ is a quotient module of $H^2(\D^n)$. Now
from (\ref{norm-1}) it follows that
\[\|g_k\|^2 = \prod_{j=1}^n (1 - |\alpha_j|^2),\]and
by \eqref{equivalent definition} we have $ \langle g_{k'},
M_{z_i}^*g_k\rangle=0$ for any $1\le k < k'\le r$ and $1\le i\le n$.
Thus using (\ref{P-1}) one can have
\[
P_{\cle}M_{z_i}^*g_k=P_{\Comp g_k}M_{z_i}^*g_k=\bar{\alpha_i}
 g_k,
\]for all $i=1,\dots,n$, and $k=1,\dots,m$.
This implies that \[P_{\cle}M_{z_i}^*|_{\cle}=
\bar{\alpha_i}I_{\cle}. \quad \quad (i=1,\dots,n)\] Since
$\mbox{dim~}\cle = r$, we see that $\mbox{rank}~\cle=r$ for
$(P_{\cle}M_{z_1}^*|_{\cle},\dots,P_{\cle}M_{z_n}^*|_{\cle})$.
 This completes the proof.
\end{proof}

\newsection{Minimal representations of quotient modules}

Let $\vp$ be a Blaschke product and $\xi$ be a non-constant factor
of $\vp$. The \textit{order} of $\xi$ in $\varphi$, denoted by
$\mbox{ord} (\vp, \xi)$, is the unique integer $m$ such that $\vp =
\xi^m \psi$ for
 an inner function $\psi$ and $\xi \not| \psi$. In particular if
$b_{\alpha}$ is a prime factor of $\vp$, then $\mbox{ord}(\vp,
b_{\alpha})$ denotes the zero order of $\vp$ at $\alpha$.

For the rest of this paper, we fix $\bm{\Phi} = (\Phi_1, \ldots,
\Phi_n)$, where $\Phi_i = \{\varphi_{i,k}\}_{k=-\infty}^{\infty}$ is
a sequence of Blaschke products with a least common multiple
$\varphi_i$, $i = 1, \ldots, n$. Our main concern here is to analyze
and compute the co-rank of the following Rudin's quotient module
\begin{equation}\label{rudin} \q_{\bm{\Phi}} =
\bigvee_{k=-\infty}^{\infty}\q_{\varphi_{1,k}} \otimes \cdots
\otimes \q_{\varphi_{n,k}}.
\end{equation} By defining
\begin{equation}\label{index set} \Lambda_k:=\I_{\varphi_{1,k}}
\times\dots\times\I_{\varphi_{n,k}}\quad (k\in\Int)\quad \text{and }
\Lambda: =\bigcup_{k\in\Int}\Lambda_k,
\end{equation}Corollary \ref{maincor3} shows that
\[
\q_{\bm{\Phi}} = \bigvee_{ (\xi_1,\dots,\xi_n) \in\Lambda}
\q_{\xi_{1}}\otimes \cdots \otimes \q_{\xi_{n}}.
\]
Now let $(\xi_1,\dots,\xi_n)\in \Lambda_k$ and $k\in\Int$. Then
$\xi_i=P(\xi_i)^{l_{i,k}}$, where $P(\xi_i)$ is the prime inner
function corresponding to $\xi_i$ and
\begin{equation}
\label{order} l_{i,k}= \mbox{ord}(\xi_i,
P(\xi_i))=\mbox{ord}(\varphi_{i,k}, P(\xi_i)). \quad\quad
(i=1,\dots,n)
\end{equation}

\noindent Thus $(\xi_1,\dots,\xi_n)\in \Lambda_k$ corresponds
precisely to a tuple of prime inner functions
$(P(\xi_1),\dots,P(\xi_n))$ and a tuple of natural numbers
$(l_{1,k}, \ldots, l_{n,k}) \in \mathbb{N}^n$. Moreover,
\begin{equation}\label{ind-set2} \clq_{\bm{\Phi}} =
\bigvee_{(\xi_1,\dots,\xi_n) \in \Lambda} \q_{P(\xi_{1})^{l_{1,k}}}
\otimes \cdots \otimes \q_{P(\xi_{n})^{l_{n,k}}}.\end{equation}
Also, note that for each $i = 1, \ldots, n$, $P(\xi_i) =
b_{\alpha_i}$ for some $\alpha_i \in \D$. Based on this observation,
we define the \textit{zero set} of the tuple $(\xi_1,\dots,\xi_n)
\in \Lambda_m$ as follows:
\[
Z(\xi_1,\dots,\xi_n) = \{k\in \mathbb{Z}: P(\xi_{i})| \varphi_{i,k}
~\text{for all}~i=1,2,\ldots,n\}.
\]
Note that $Z(\xi_1,\dots,\xi_n)$ is a countable and non-empty set
(since $m \in Z(\xi_1,\dots,\xi_n)$).

If we define the quotient module $\q(\xi_1,\dots,\xi_n)$ by

\begin{equation}\label{ntilde}
\q(\xi_1,\dots,\xi_n): = \bigvee_{k\in Z(\xi_1,\dots,\xi_n)}
\q_{P(\xi_{1})^{l_{1,k}}} \otimes \cdots \otimes
\q_{P(\xi_{n})^{l_{n,k}}},
\end{equation}
then by (\ref{ind-set2}) it follows that
\[
\q_{\bm{\Phi}} =\bigvee_{(\xi_1,\dots,\xi_n)\in\Lambda}
\q(\xi_1,\dots,\xi_n).
\]

This sets the stage for the following result concerning a minimal
representation of $\q(\xi_1,\dots,\xi_n)$:

\begin{Proposition}
\label{minimal of q} Let $\q(\xi_1,\dots,\xi_n)$ be as in
(\ref{ntilde}) for some $(\xi_1,\dots,\xi_n)\in\Lambda$. Then there
exists a finite subset $\tilde{Z}(\xi_1,\dots,\xi_n)$ of
$Z(\xi_1,\dots,\xi_n)$ with minimal cardinality such that
\begin{equation}\label{ztilde}
\q(\xi_1,\dots,\xi_n)= \bigvee_{k\in \tilde{Z}(\xi_1,\dots,\xi_n)}
\q_{P(\xi_{1})^{l_{1,k}}}\otimes \cdots \otimes
\q_{P(\xi_{n})^{l_{n,k}}}.
\end{equation}
\end{Proposition}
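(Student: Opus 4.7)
The plan is to reduce the index set $Z(\xi_1,\dots,\xi_n)$ to a finite antichain by first showing the relevant data is bounded, then performing exactly the same removal procedure that produced $\tilde{A}$ in the passage from \eqref{nset} to \eqref{mnset}. The key observation is that, because $\varphi_i$ is the least common multiple of $\Phi_i$, we have $\varphi_{i,k}\,|\,\varphi_i$ for every $k\in\mathbb{Z}$, and hence
\[
l_{i,k} \;=\; \mbox{ord}(\varphi_{i,k}, P(\xi_i)) \;\le\; \mbox{ord}(\varphi_i, P(\xi_i)) \;<\; \infty
\]
for every $k\in Z(\xi_1,\dots,\xi_n)$ and every $i=1,\dots,n$. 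Consequently the set of tuples $\{(l_{1,k},\dots,l_{n,k}) : k\in Z(\xi_1,\dots,\xi_n)\}$ is a finite subset of $\mathbb{N}^n$, so (after choosing one representative $k$ for each distinct tuple) we may replace $Z(\xi_1,\dots,\xi_n)$ by a finite subset $Z_0$ without changing the subspace \eqref{ntilde}.

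Next I would apply the reduction procedure used in Section 3. For $k,k'\in Z_0$, whenever $l_{i,k}\le l_{i,k'}$ for all $i=1,\dots,n$, inclusion \eqref{inc-red} gives
\[
\q_{P(\xi_1)^{l_{1,k}}}\otimes\cdots\otimes\q_{P(\xi_n)^{l_{n,k}}} \;\subseteq\; \q_{P(\xi_1)^{l_{1,k'}}}\otimes\cdots\otimes\q_{P(\xi_n)^{l_{n,k'}}},
\]
so by \eqref{inc-red2} the index $k$ can be deleted from $Z_0$ without affecting the span. Iterating this deletion finitely many times produces a subset $\tilde{Z}(\xi_1,\dots,\xi_n)\subseteq Z(\xi_1,\dots,\xi_n)$ satisfying \eqref{ztilde} with the antichain property analogous to \eqref{equivalent definition}: for any two distinct $k,k'\in\tilde{Z}(\xi_1,\dots,\xi_n)$ there exist $i\ne j$ with $l_{i,k}<l_{i,k'}$ and $l_{j,k}>l_{j,k'}$.

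Finally I would verify minimal cardinality. Suppose some proper subset $\tilde{Z}'\subsetneq \tilde{Z}(\xi_1,\dots,\xi_n)$ still yields the same quotient module. Pick $k_0\in\tilde{Z}(\xi_1,\dots,\xi_n)\setminus\tilde{Z}'$ and consider the vector
\[
g_{k_0} \;:=\; b_{\alpha_1}^{l_{1,k_0}-1}M_z^*b_{\alpha_1} \otimes\cdots\otimes b_{\alpha_n}^{l_{n,k_0}-1}M_z^*b_{\alpha_n},
\]
where $P(\xi_i)=b_{\alpha_i}$; this vector lies in $\q_{P(\xi_1)^{l_{1,k_0}}}\otimes\cdots\otimes\q_{P(\xi_n)^{l_{n,k_0}}}$, and the antichain property together with Lemma \ref{lemma 1} (applied in each slot where $l_{i,k_0}<l_{i,k}$) forces $g_{k_0}$ to be orthogonal to every summand indexed by $k\in\tilde{Z}'$. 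This contradicts the assumed equality, proving minimality.

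The only real content is the boundedness step forced by the least-common-multiple hypothesis; the rest is a direct transcription of the finite-dimensional argument from Section 3, and I do not expect any genuine obstacle.
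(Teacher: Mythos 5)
Your argument is essentially the paper's: the least-common-multiple hypothesis bounds each $l_{i,k}$ by $\mbox{ord}(\varphi_i,P(\xi_i))<\infty$, so the set of tuples is a finite subset of $\mathbb{N}^n$, and then the deletion procedure driven by \eqref{inc-red} and \eqref{inc-red2} extracts a finite antichain $\tilde{Z}$ without changing the span. Your closing paragraph verifying minimal cardinality is a genuine addition (the paper's proof stops at finiteness and lets the construction implicitly carry minimality); it is correct in substance, although Lemma \ref{lemma 1} is not quite the right thing to cite for orthogonality of $g_{k_0}$ against the summands indexed by $\tilde{Z}'$ --- the relevant fact is simply that $b_{\alpha_i}^{l_{i,k_0}-1}M_z^*b_{\alpha_i}\in b_{\alpha_i}^{l_{i,k}}H^2(\D)=\q_{b_{\alpha_i}^{l_{i,k}}}^{\perp}$ whenever $l_{i,k_0}>l_{i,k}$, and the antichain property guarantees such an $i$ exists for every $k\in\tilde{Z}'$.
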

\begin{proof}
First consider the set of tuples $\{(l_{1,k}, \ldots,l_{n,k})\in
\Nat^n : k\in Z(\xi_1,\dots,\xi_n) \}$, where $l_{i,k}$ is defined as
in ~\eqref{order} for $i = 1, \ldots, n$. Then construct
$\tilde{Z}(\xi_1,\dots,\xi_n)\subseteq Z(\xi_1,\dots,\xi_n)$ by
removing those $k \in Z(\xi_1,\dots,\xi_n)$ for which there exists
$k'\in Z(\xi_1,\dots,\xi_n)$ such that $l_{i,k'}\ge l_{i,k}$ for all
$i=1,\dots,n$. Then the equality \eqref{ztilde}, for
$\tilde{Z}(\xi_1,\dots,\xi_n)$ as constructed above, follows from
\eqref{inc-red} and \eqref{inc-red2}. Finally, since the sequence
$\{\varphi_{i,k}\}_{k=-\infty}^{\infty}$ has a least common
multiple, we obviously have
\[\sup_{k\in\Int}l_{i,k}=
\sup_{k\in\Int}\mbox{ord}(\varphi_{i,k}, P(\xi_i))<\infty, \quad
\quad\quad (i=1,\dots,n)\]and hence it follows that the cardinality
of $\{(l_{1,k}, \ldots,l_{n,k})\in \Nat^n : k\in
Z(\xi_1,\dots,\xi_n) \}$ is finite. Therefore
$\tilde{Z}(\xi_1,\dots,\xi_n)$ is a finite set. This concludes the
proof of the proposition.
\end{proof}

We will call the representation in \eqref{ztilde} the
\textit{minimal representation} of $\q(\xi_1,\dots,\xi_n)$.

The following result is useful in connection with the existence of
minimal index set $\tilde{Z}(\xi_1,\dots,\xi_n)$.

\begin{Proposition}\label{key lemma}
Let $\{\varphi_{i,k}\}_{k=-\infty}^{\infty}$ be a sequence of
Blaschke products with a least common multiple inner function
$\varphi_i$, for all $i=1,\dots,n$. Then for each
$(\xi_1,\dots,\xi_n)\in\Lambda$,
\[
\mbox{co-rank}~\q(\xi_1,\dots,\xi_n)=
\#\tilde{Z}(\xi_1,\dots,\xi_n),\] where
$\tilde{Z}(\xi_1,\dots,\xi_n)$ is the minimal index set for
$\q(\xi_1,\dots,\xi_n)$ as in Proposition~\ref{minimal of q}.
\end{Proposition}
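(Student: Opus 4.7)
The plan is to reduce this directly to Proposition~\ref{mainlemma4}. For each $i=1,\dots,n$ the prime inner function $P(\xi_i)$ is of the form $b_{\alpha_i}$ for some $\alpha_i\in\D$, so setting $\bm{\alpha}=(\alpha_1,\dots,\alpha_n)$, the quotient module $\q(\xi_1,\dots,\xi_n)$ defined in \eqref{ntilde} can be rewritten as
\[
\q(\xi_1,\dots,\xi_n)\;=\;\bigvee_{k\in Z(\xi_1,\dots,\xi_n)}\q_{b_{\alpha_1}^{l_{1,k}}}\otimes\cdots\otimes \q_{b_{\alpha_n}^{l_{n,k}}}\;=\;\q(\bm{\alpha};A),
\]
where $A:=\{(l_{1,k},\dots,l_{n,k})\in(\Nat\setminus\{0\})^n:k\in Z(\xi_1,\dots,\xi_n)\}$. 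Thus $\q(\xi_1,\dots,\xi_n)$ falls within the class of finite-dimensional quotient modules treated in Section~3.

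Next I would verify that the set $\tilde{A}:=\{(l_{1,k},\dots,l_{n,k}):k\in\tilde{Z}(\xi_1,\dots,\xi_n)\}$ is precisely the minimal representation of $\q(\bm{\alpha};A)$ in the sense of \eqref{mnset} and \eqref{equivalent definition}. Indeed, the construction in the proof of Proposition~\ref{minimal of q} removes those $k$ for which some $k'$ satisfies $l_{i,k'}\ge l_{i,k}$ for every $i$; by \eqref{inc-red}--\eqref{inc-red2} this is exactly the reduction process of Section~3, so no tuple in $\tilde{A}$ is componentwise dominated by any other, which is the condition \eqref{equivalent definition}. In particular, distinct elements of $\tilde{Z}(\xi_1,\dots,\xi_n)$ give rise to distinct tuples in $\tilde{A}$, so $\#\tilde{A}=\#\tilde{Z}(\xi_1,\dots,\xi_n)$.

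Applying Proposition~\ref{mainlemma4} to $\q(\bm{\alpha};A)$ then yields
\[
\operatorname{co-rank}\q(\xi_1,\dots,\xi_n)\;=\;\operatorname{co-rank}\q(\bm{\alpha};A)\;=\;\#\tilde{A}\;=\;\#\tilde{Z}(\xi_1,\dots,\xi_n),
\]
which is the desired equality. Since all substantive work has already been done in Proposition~\ref{mainlemma4} (the two-sided estimate using star-generators on one side and the finite-dimensional subspace $\cle=\bigoplus \Comp g_k$ of joint eigenvectors combined with Lemma~\ref{lemma 2} on the other), there is no real obstacle here. The only point requiring care is the observation that the combinatorial minimality built into $\tilde{Z}(\xi_1,\dots,\xi_n)$ matches the minimality built into $\tilde{A}$; once that identification is in place, the proposition follows at once.
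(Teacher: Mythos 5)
Your proof is correct and follows essentially the same route as the paper: identify $\q(\xi_1,\dots,\xi_n)$ with a quotient module $\q(\bm{\alpha};A)$ of Section~3, observe that the minimal index set $\tilde{Z}(\xi_1,\dots,\xi_n)$ translates to the minimal representation $\tilde{A}$ of Section~3, and invoke Proposition~\ref{mainlemma4}. The only thing you add beyond the paper's own (quite terse) proof is an explicit check that the two notions of minimality coincide and that the map $k\mapsto(l_{1,k},\dots,l_{n,k})$ is injective on $\tilde{Z}(\xi_1,\dots,\xi_n)$, both of which are correct and worth spelling out.
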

\begin{proof}
Let us set, for $i = 1, \ldots, n$, \[P(\xi_i)=b_{\alpha_i},\]for
some $\alpha_i\in\D$, and $\bm{\alpha} = (\alpha_1, \ldots,
\alpha_n)$. Using the notation in \eqref{nset}, we have
\[\q(\xi_1,\dots,\xi_n) = \q(\bm{\alpha}; A(\xi_1,\dots,\xi_n)) =
\q(\bm{\alpha}; \tilde{A}(\xi_1,\dots,\xi_n)),\] where
\[A(\xi_1,\dots,\xi_n)=\{(l_{1,k}, \ldots,l_{n,k})\in \Nat^n
: k\in Z(\xi_1,\dots,\xi_n)\},\] and \[\tilde{A}(\xi_1,\dots,\xi_n)=
\{(l_{1,k},\ldots,l_{n,k})\in \Nat^n : k\in
\tilde{Z}(\xi_1,\dots,\xi_n)\},\] and  $\q(\bm{\alpha};
\tilde{A}(\xi_1,\dots,\xi_n))$ is the minimal representation of
$\q(\bm{\alpha};A(\xi_1,\dots,\xi_n))$. Then the desired equality
follows from Proposition ~\ref{mainlemma4}. This completes the
proof.
\end{proof}

Now we observe that for $(\xi_1,\dots,\xi_n),
(\xi_1',\dots,\xi_n')\in \Lambda$, if $P(\xi_i)=P(\xi_i')$ for all
$i=1,\dots,n$, then $\q(\xi_1,\dots,\xi_n)=\q(\xi_1',\dots,\xi_n')$.
Consequently, $\sim$ is an equivalence relation on $\Lambda$, where
$(\xi_1,\dots,\xi_n)\sim (\xi_1',\dots,\xi_n')$ if
$P(\xi_i)=P(\xi_i')$ for all $i=1,\dots,n$. This readily implies
that
\[
\q_{\bm{\Phi}} = \bigvee_{ (\xi_1,\dots,\xi_n)\in [\Lambda]}
\q(\xi_1,\dots,\xi_n),
\]
where $[\Lambda]:=\Lambda/\sim$ is the set of all equivalence
classes in $\Lambda$.

\newsection{Co-rank of $\q_{\bm{\Phi}}$}

In this section we compute the co-rank of the quotient module of the
form \eqref{rudin}.

\begin{Theorem}\label{mainth1}
Let $\{\varphi_{i,k}\}_{k=-\infty}^{\infty}$ be a sequence of
Blaschke products with a least common multiple inner function
$\varphi_i$, $i=1,\dots,n$, and let
\[
 \q_{\bm{\Phi}} = \bigvee_{k=-\infty}^{\infty}\q_{\varphi_{1,k}}
\otimes \cdots \otimes \q_{\varphi_{n,k}}.
\]
Then
\begin{align*}
\mbox{co-rank~} \q_{\bm{\Phi}} = \sup_{(\xi_1,\dots,\xi_n)\in
\Lambda} \mbox{co-rank~} \q(\xi_1,\dots,\xi_n) =
\sup_{(\xi_1,\dots,\xi_n)\in \Lambda}\#\tilde{Z}(\xi_1,\dots,\xi_n),
\end{align*}
where $\Lambda$ is as in ~\eqref{index set} and for
$(\xi_1,\dots,\xi_n)\in \Lambda$, $\tilde{Z}(\xi_1,\dots,\xi_n)$ is
the minimal index set for the minimal representation of
$\q(\xi_1,\dots,\xi_n)$ as in ~\eqref{ztilde}.
\end{Theorem}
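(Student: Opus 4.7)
The second equality is immediate from Proposition \ref{key lemma}, so the work reduces to proving
\[
\mbox{co-rank~}\q_{\bm{\Phi}} = \sup_{[\xi]\in[\Lambda]}\mbox{co-rank~}\q(\xi_1,\dots,\xi_n),
\]
where $[\Lambda] = \Lambda/\!\sim$ as introduced at the end of Section 4. My plan is to attach, to each equivalence class $[\xi^{(m)}] \in [\Lambda]$ with base point $(\alpha_1^{(m)},\dots,\alpha_n^{(m)}) \in \D^n$ determined by $P(\xi_i^{(m)}) = b_{\alpha_i^{(m)}}$, a \emph{separator inner function}
\[
\psi^{(m)}(\bm{z}) := \psi_1^{(m)}(z_1)\cdots\psi_n^{(m)}(z_n),\qquad \psi_i^{(m)} := \varphi_i / b_{\alpha_i^{(m)}}^{L_{i,m}},
\]
where $L_{i,m}$ is the order of $b_{\alpha_i^{(m)}}$ in the least common multiple $\varphi_i$. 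The adjoint multiplication operator $M_{\psi^{(m)}}^*$ is designed to simultaneously annihilate the other classes and act as a bijection on $\q(\xi^{(m)})$.

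The two key properties I would verify are the following. (i) \emph{$M_{\psi^{(m)}}^*$ annihilates $\q(\xi^{(m')})$ for every $m' \neq m$.} Since $[\xi^{(m')}] \neq [\xi^{(m)}]$, there is a coordinate $i$ with $\alpha_i^{(m')} \neq \alpha_i^{(m)}$; as $\psi_i^{(m)}$ carries every prime factor of $\varphi_i$ other than $b_{\alpha_i^{(m)}}$, one has $b_{\alpha_i^{(m')}}^{l_{i,k}} \,\big|\, \psi_i^{(m)}$ for every $k \in Z(\xi^{(m')})$, and Lemma \ref{lemma 1} kills the corresponding summand of $\q(\xi^{(m')})$. (ii) \emph{$M_{\psi^{(m)}}^*$ is a bijection of $\q(\xi^{(m)})$ onto itself.} On each summand $\q_{b_{\alpha_1^{(m)}}^{l_{1,k}}}\otimes\cdots\otimes\q_{b_{\alpha_n^{(m)}}^{l_{n,k}}}$ it splits as a tensor product of maps $M_{\psi_i^{(m)}}^* : \q_{b_{\alpha_i^{(m)}}^{l_{i,k}}} \to \q_{b_{\alpha_i^{(m)}}^{l_{i,k}}}$; since $\psi_i^{(m)}$ and $b_{\alpha_i^{(m)}}^{l_{i,k}}$ are relatively prime, Corollary \ref{mainlemma1} ensures each factor sends a star-cyclic vector to a star-cyclic vector and therefore defines a bijection on a finite-dimensional space; injectivity on the ambient $\q_{b_{\alpha_1^{(m)}}^{L_{1,m}}}\otimes\cdots\otimes\q_{b_{\alpha_n^{(m)}}^{L_{n,m}}}$ combined with finite-dimensionality of $\q(\xi^{(m)})$ then gives bijectivity on $\q(\xi^{(m)})$.

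Given these two properties, $\mbox{co-rank~}\q_{\bm{\Phi}} \geq \mbox{co-rank~}\q(\xi^{(m)})$ drops out: for any star-generating set $\{f_1,\dots,f_N\}$ of $\q_{\bm{\Phi}}$, the commutation $M_{\psi^{(m)}}^* M_{z_j}^* = M_{z_j}^* M_{\psi^{(m)}}^*$ together with (i) and (ii) yields $M_{\psi^{(m)}}^*(\q_{\bm{\Phi}}) = \q(\xi^{(m)})$, and continuity of $M_{\psi^{(m)}}^*$ delivers $[M_{\psi^{(m)}}^* f_1,\dots, M_{\psi^{(m)}}^* f_N]_{M_z^*} = \q(\xi^{(m)})$. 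For the reverse inequality, assume $N := \sup_{[\xi]} \#\tilde{Z}(\xi) < \infty$ (the case $N=\infty$ is vacuous) and enumerate $[\Lambda] = \{[\xi^{(m)}] : m \in \Nat\}$. By Proposition \ref{mainlemma4} select explicit star-generators $h_1^{(m)},\dots,h_{r_m}^{(m)}$ of $\q(\xi^{(m)})$ with $r_m = \#\tilde{Z}(\xi^{(m)}) \leq N$, padded by zero so that $h_k^{(m)}$ is defined for every $k \leq N$. Put
\[
g_k := \sum_{m\in\Nat} c_m h_k^{(m)} \in \q_{\bm{\Phi}},\qquad k=1,\dots,N,
\]
with $c_m > 0$ chosen so each series converges absolutely (e.g., $c_m := 2^{-m}\bigl(1+\max_k\|h_k^{(m)}\|\bigr)^{-1}$). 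Applying $M_{\psi^{(m)}}^*$ collapses $g_k$ to $c_m M_{\psi^{(m)}}^* h_k^{(m)}$ by (i), and property (ii) together with commutation shows that $\{M_{\psi^{(m)}}^* h_k^{(m)}\}_{k\leq r_m}$ still star-generates $\q(\xi^{(m)})$. Since the $M_z^*$-invariant closed subspace $[g_1,\dots,g_N]_{M_z^*}$ is a quotient module of $H^2(\D^n)$, it is preserved by the adjoint $M_{\psi^{(m)}}^*$ of the bounded analytic multiplier; hence $M_{\psi^{(m)}}^* g_k \in [g_1,\dots,g_N]_{M_z^*}$ and therefore $\q(\xi^{(m)}) \subseteq [g_1,\dots,g_N]_{M_z^*}$ for every $m$. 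Taking the closed join yields $\q_{\bm{\Phi}} \subseteq [g_1,\dots,g_N]_{M_z^*}$, so $\mbox{co-rank~}\q_{\bm{\Phi}} \leq N$.

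The main obstacle I anticipate is the careful verification of (ii): the minimal representation of $\q(\xi^{(m)})$ consists of mutually overlapping tensor-product summands, so bijectivity has to be read off the common ambient space $\q_{b_{\alpha_1^{(m)}}^{L_{1,m}}}\otimes\cdots\otimes\q_{b_{\alpha_n^{(m)}}^{L_{n,m}}}$ via injectivity plus a dimension count, rather than summand-by-summand. A secondary technical point is the invariance of $[g_1,\dots,g_N]_{M_z^*}$ under $M_{\psi^{(m)}}^*$, which rests on the standard $H^\infty(\D^n)$-module fact that closed submodules of $H^2(\D^n)$ are invariant under multiplication by every bounded analytic function.
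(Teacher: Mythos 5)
Your proof is correct and follows essentially the same strategy as the paper's: construct, for each $(\xi_1,\dots,\xi_n)\in\Lambda$, a tensor-product inner function whose adjoint is surjective onto $\q(\xi_1,\dots,\xi_n)$ while annihilating every tensor summand attached to a different equivalence class (lower bound via Lemma~\ref{lemma 1}, Corollary~\ref{mainlemma1} and Lemma~\ref{lemma 2}), then form weighted absolutely convergent sums of padded star-generators across $[\Lambda]$ and recover each $\q(\xi_1,\dots,\xi_n)$ by applying the same adjoints inside the $H^\infty$-co-invariant subspace they generate (upper bound). The one place where your write-up is tighter than the paper's is the exponent in the separating multiplier: the paper sets $a_i=\sup\{l_{i,m}:m\in\tilde Z(\xi_1,\dots,\xi_n)\}$ and asserts that $\varphi_i(\xi_i)=\varphi_i/P(\xi_i)^{a_i}$ is relatively prime to every $P(\xi_i)^t$, but $a_i$ can be strictly less than $\mbox{ord}(\varphi_i,P(\xi_i))$ when the indices $k$ on which $\mbox{ord}(\varphi_{i,k},P(\xi_i))$ is maximal do not lie in $Z(\xi_1,\dots,\xi_n)$, in which case $P(\xi_i)$ still divides $\varphi_i(\xi_i)$ and \eqref{main5} fails. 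Your choice $L_{i,m}=\mbox{ord}(\varphi_i,b_{\alpha_i^{(m)}})$ eliminates this issue outright, since $\psi_i^{(m)}$ is then automatically coprime to every power of $b_{\alpha_i^{(m)}}$; this is the intended (and correct) exponent, and the rest of your argument — annihilation via Lemma~\ref{lemma 1}, surjectivity on each summand via Corollary~\ref{mainlemma1}, $M_{\psi^{(m)}}^*$-invariance of $[g_1,\dots,g_N]_{M_z^*}$ as a quotient-module fact, and bijectivity read off the finite-dimensional ambient $\q_{b_{\alpha_1^{(m)}}^{L_{1,m}}}\otimes\cdots\otimes\q_{b_{\alpha_n^{(m)}}^{L_{n,m}}}$ — matches the paper in substance.
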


\begin{proof}
By Proposition \ref{key lemma}, we have
\[
\sup_{(\xi_1,\dots,\xi_n)\in \Lambda}
\mbox{co-rank~}\q(\xi_1,\dots,\xi_n) = \sup_{(\xi_1,\dots,\xi_n)\in
\Lambda}\#\tilde{Z}(\xi_1,\dots,\xi_n).
\]
Now to see the first equality, let $(\xi_1,\dots,\xi_n)\in\Lambda$.
Set
\[a_i:=\sup\{l_{i,m}: m\in \tilde{Z}(\xi_1,\dots,\xi_n)\},\]
where $l_{i,m} = \mbox{order~} (\varphi_{i,m}, P(\xi_i))$, $m\in
\tilde{Z}(\xi_1,\dots,\xi_n)$, and $1\le i\le n$. Since
$\{\varphi_{i,k}\}_{k=-\infty}^\infty$ has a least common multiple,
then  $a_i<\infty$, and
\[
\varphi_{i}(\xi_i) : = \frac{\varphi_i}{P(\xi_i)^{a_i}},
\]is a Blaschke product for all $i = 1, \ldots, n$.
Since $\varphi_{i}(\xi_i)$ and $P(\xi_{i})^{t}$ are relatively prime
for any $t \in\Nat\setminus \{0\}$ and $i=1,2,\ldots,n$, by
Corollary~\ref{mainlemma1} we conclude that
\begin{equation}\label{main5} \begin{split}
M_{\varphi_{1}(\xi_1)}^*\ot \cdots\ot M_{\varphi_{n}(\xi_n)}^*
\left(\q_{P(\xi_{1})^{l_{1,m}}}\otimes \cdots \otimes
\q_{P(\xi_{n})^{l_{n,m}}}\right) = \q_{P(\xi_{1})^{l_{1,m}}}\otimes
\cdots \otimes \q_{P(\xi_{n})^{l_{n,m}}},
\end{split}
\end{equation}
for all $m\in Z(\xi_1,\dots,\xi_n)$. On the other hand, let
$(\xi_1',\dots,\xi_n')\in \Lambda$ be such that $P(\xi_i)$ is not a
factor of $\xi_i'$ for some $1\le i\le n$. This implies that
$\xi'_i|\varphi_{i}(\xi_i)$ for some $1\le i \le n$.
Consequently, by Lemma~\ref{lemma 1}
\begin{equation}\label{main4}
M_{\varphi_1(\xi_1)}^*\ot \cdots\ot M_{\varphi_{n}(\xi_n)}^*
\left(\q_{\xi_{1}'}\otimes \cdots \otimes \q_{\xi_{n}'}\right)\\ =
\{0\}.
\end{equation} Thus combining \eqref{main4} and \eqref{main5},
we have
\[
M_{\varphi_{1}(\xi_1)}^* \ot\cdots\ot
M_{\varphi_{n}(\xi_n)}^*\left(\q(\xi_1,\dots,\xi_n)\right) =
\q(\xi_1,\dots,\xi_n),
\]
and
\[
M_{\varphi_{1}(\xi_1)}^* \ot\cdots\ot
M_{\varphi_{n}(\xi_n)}^*\left(\q_{\bm{\Phi}}\right) =
\q(\xi_1,\dots,\xi_n).
\]
This yields $\mbox{co-rank}~\q(\xi_1,\dots,\xi_n) \leq
\mbox{co-rank}~\q_{\bm{\Phi}}$ for all $(\xi_1,\dots,\xi_n)\in
\Lambda$.

\noindent To prove the reverse inequality, we may assume that
\[ m_0: =\sup_{(\xi_1,\dots,\xi_n)\in
\Lambda}\#\tilde{Z}(\xi_1,\dots,\xi_n)<\infty.
\]
It is thus enough to show that $\q_{\bm{\Phi}}$ is (co-)generated by
$m_0$ vectors. We proceed next with the detailed construction of a
co-generating set of vectors of cardinality $m_0$.

\noindent Let $k\in\Int$ and $(\xi_1,\dots,\xi_n)\in \Lambda_k$.
Also for all $m\in Z(\xi_1,\dots,\xi_n)$, let $f_m(\xi_i) \in
\q_{P(\xi_i)^{l_{i,m}}}$ be a unit star-cyclic vector of
$\q_{P(\xi_i)^{l_{i,m}}}$, $i=1,\dots,n$. Obviously
$f_m(\xi_1)\otimes \cdots\otimes f_m(\xi_n)$ is a star-cyclic vector
of $\q_{P(\xi_1)^{l_{1,m}}}\otimes \cdots \otimes
\q_{P(\xi_{n})^{l_{n,m}}}$ for all $m\in Z(\xi_1,\dots,\xi_n)$. In
this setting, we relabel the set of unit vectors $\{f_m(\xi_1)\otimes
\cdots\otimes f_m(\xi_n): m\in \tilde{Z}(\xi_1,\dots,\xi_n)\}$ by
defining a bijective function
\[g:\{1,\dots,\#
\tilde{Z}(\xi_1,\dots,\xi_n)\}\to \tilde{Z}(\xi_1,\dots,\xi_n),\] and
letting
\begin{equation*}
F_r(\xi_1,\dots,\xi_n) =
\begin{cases}
f_{g(r)}(\xi_1) \otimes \cdots \otimes
f_{g(r)}(\xi_n)\hspace{01.8cm} \mbox{if~}
1\le r\le \#\tilde{Z}(\xi_1,\dots,\xi_n); &\\
0\hspace{6cm}\mbox{if~} \#\tilde{Z}(\xi_1,\dots,\xi_n)< r\leq m_0.
\end{cases}
\end{equation*}
Thus corresponding to each $(\xi_1,\dots,\xi_n)\in [\Lambda]$, we
have $m_0$ number of vectors of the above form. We now use these
facts to define
\[
G_r = \sum_{(\xi_1,\dots,\xi_n)\in [\Lambda]}
C(\xi_1,\dots,\xi_n)F_r(\xi_1,\dots,\xi_n) ,\hspace{0.5cm} 1\leq
r\leq m_0,
\]
where the sum is over a countable set and the constants
$C(\xi_1,\dots,\xi_n)$ are so that the above sum converges. Then
$G_r\in \q_{\bm{\Phi}}$ for $1\leq r\leq m_0$. Next consider the
subspace
\[
\Omega = \bigvee_{ t_1,t_2,\ldots,t_{n}\in \Nat}
M_{z_1}^{*{t_1}}\ot \cdots\ot M_{z_n}^{*{t_n}} \{G_1,
\ldots,G_{m_0}\}.
\]
Since $G_r\in \q_{\bm{\Phi}}$ for $1\leq r\leq m_0$, we obviously
have $\Omega \subseteq \q_{\bm{\Phi}}$. Now for
$(\xi_1,\dots,\xi_n)\in\Lambda$ and $1\leq r\leq
\tilde{Z}(\xi_1,\dots,\xi_n)$, we have
\[
M_{\varphi_{1}(\xi_1)}^* \ot\cdots\ot
M_{\varphi_{n}(\xi_n)}^*\left(G_r\right)\in \Omega,
\]
and using ~\eqref{main4} and \eqref{main5} we conclude that
\begin{align}
\label{mainlast} &M_{\varphi_{1}(\xi_1)}^*\ot \cdots\ot
M_{\varphi_{n}(\xi_n)}^*\left(G_r\right)\\
& = C(\xi_1,\dots,\xi_n)M_{\varphi_{1}(\xi_1)}^* \ot\cdots\ot
M_{\varphi_{n}(\xi_n)}^*\left(F_r(\xi_1,\dots,\xi_n)\right)\nonumber\\
& =
C(\xi_1,\dots,\xi_n)M_{\varphi_{1}(\xi_1)}^*\left(f_{g(r)}(\xi_1)\right)
\otimes \cdots \otimes
M_{\varphi_{n}(\xi_n)}^*\left(f_{g(r)}(\xi_n)\right)\nonumber.
\end{align}
By virtue of Corollary~\ref{mainlemma1},
\[
M_{\varphi_{1}(\xi_1)}^*\left(f_{g(r)}(\xi_1)\right) \otimes \cdots
\otimes M_{\varphi_{n}(\xi_n)}^*\left(f_{g(r)}(\xi_n)\right)
\]
is a star-cyclic vector of
\[
\q_{P(\xi_{1})^{l_{1,g(r)}}}\otimes \cdots \otimes
\q_{P(\xi_{n})^{l_{n,g(r)}}}.
\]
Hence we obtain $\q(\xi_1,\dots,\xi_n) \subseteq \Omega$ for all
$(\xi_1,\dots,\xi_n)\in[\Lambda]$, and consequently $\Omega =
\q_{\bm{\Phi}}$. As a result we have
$\mbox{co-rank}~\q_{\bm{\Phi}}\leq m_0$, and this concludes the
proof.
\end{proof}

Let $A \subsetneqq \{1, \ldots, n\}$ and $\Phi_i =
\{\varphi_{i,k}\}_{k=-\infty}^\infty$ be a sequence of Blaschke
products with no common non-constant inner function, $i = 1, \ldots,
n$. The contents of the last section can be adopted to a general
class of Rudin's quotient modules $\q_{\bm{\Phi}}$, where $\Phi_i$
is increasing for $i \in A$ and decreasing for $i \in B: = \{1,
\ldots, n\} \setminus A$.

\noindent In this case for each $(\xi_1,\dots,\xi_n)\in\Lambda$,
\[Z(\xi_1,\dots,\xi_n) = \{k\in\Int: r_1\le k\le r_2\},\]
where
\begin{align}
\label{r}
& r_1=\min\{k\in\Int: P(\xi_i)|\varphi_{i,k} \text{ for all } i\in
A\},\text{ and }\\
& r_2=\max\{k\in\Int: P(\xi_i)|\varphi_{i,k} \text{ for
all } i\in B \}.
\nonumber
\end{align}
Note first that $|r_1|, |r_2| < \infty$. This follows from the fact
that $\Phi_i$, $i = 1, \ldots, n$, does not have any common inner
factor. Consequently, $Z(\xi_1,\dots,\xi_n)$ is a finite set. Note
that in the proof of Theorem~\ref{mainth1}, the assumption that each
of the sequence has least common multiple has been used to ensure
that $\#\tilde{Z}(\xi_1,\dots,\xi_n)<\infty$ and also used to
construct inner functions so that ~\eqref{main5}, ~\eqref{main4} and
~\eqref{mainlast} holds. In the present consideration, we can still
do this by defining
\begin{equation}
\varphi_i(\xi_i)=\left\{\begin{array}{cl}
\frac{\varphi_{i,r_1}}{P(\xi_i)^{l_{i,r_1}}}& \text{if } i\in B,\\
\frac{\varphi_{i,r_2}}{P(\xi_i)^{l_{i,r_2}}}& \text{if } i\in A,
\end{array}\right.
\end{equation}
where $i = 1, \ldots, n$, and $r_1$ and $r_2$ are as in ~\eqref{r}. We
can see now the proof of the co-rank equality, as in
Theorem~\ref{mainth1}, for this quotient module follows along the
same line as the proof of Theorem~\ref{mainth1}. Therefore, we have
the following theorem:

\begin{Theorem}\label{mainth2}
Let $A$ be a proper non-empty subset of $\{1, \ldots, n\}$ and $B: =
\{1, \ldots, n\} \setminus A$, and let
$\Phi_i=\{\varphi_{i,k}\}_{k=-\infty}^\infty$ be a sequence of
Blaschke products with no common non-constant inner function, $i=1,\dots,n$. Also
let $\Phi_i$ be increasing for all $i\in A$ and decreasing for all
$i\in B$. Then
\begin{equation*}
\mbox{co-rank}~\q_{\bm{\Phi}}=\sup_{(\xi_1,\dots,\xi_n)\in
\Lambda}\#\tilde{Z}(\xi_1,\dots,\xi_n).
\end{equation*}
\end{Theorem}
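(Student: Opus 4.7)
The plan is to adapt the proof of Theorem~\ref{mainth1} essentially verbatim, the single substantive change being the construction of the auxiliary Blaschke products $\varphi_i(\xi_i)$. The first step is to confirm that for every $(\xi_1,\dots,\xi_n)\in\Lambda$ the index set $Z(\xi_1,\dots,\xi_n)$ equals $\{k\in\Int:r_1\le k\le r_2\}$ with $r_1,r_2$ finite. Finiteness is where the no-common-factor hypothesis enters: if $\{k:P(\xi_i)\mid\varphi_{i,k}\}$ were unbounded below for some $i\in A$, monotonicity of the increasing sequence $\Phi_i$ would make $P(\xi_i)$ a common non-constant inner factor of $\Phi_i$, a contradiction, and symmetrically for $r_2$ and $i\in B$. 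That the whole integer interval between $r_1$ and $r_2$ lies in $Z(\xi_1,\dots,\xi_n)$ is immediate from monotonicity. Finiteness of $Z$ then yields a finite minimal subset $\tilde Z(\xi_1,\dots,\xi_n)$ exactly as in Proposition~\ref{minimal of q}, and the argument of Proposition~\ref{key lemma} (ultimately Proposition~\ref{mainlemma4}) gives $\mbox{co-rank}\,\q(\xi_1,\dots,\xi_n)=\#\tilde Z(\xi_1,\dots,\xi_n)$.

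Next I would reproduce the two multiplication identities that drove the proof of Theorem~\ref{mainth1}. The analogue of \eqref{main5} is immediate: by construction $\varphi_i(\xi_i)$ carries no $P(\xi_i)$-factor, so $\varphi_i(\xi_i)$ and $P(\xi_i)^{l_{i,m}}$ are relatively prime for every $m\in Z(\xi_1,\dots,\xi_n)$, and Corollary~\ref{mainlemma1} combined with the commutation of $M_{\varphi_i(\xi_i)}^*$ and $M_z^*$ yields
\[
M_{\varphi_1(\xi_1)}^*\ot\cdots\ot M_{\varphi_n(\xi_n)}^*\bigl(\q_{P(\xi_1)^{l_{1,m}}}\ot\cdots\ot\q_{P(\xi_n)^{l_{n,m}}}\bigr)=\q_{P(\xi_1)^{l_{1,m}}}\ot\cdots\ot\q_{P(\xi_n)^{l_{n,m}}}.
\]

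The step where I expect the main obstacle is the analogue of \eqref{main4}: given any $(\xi_1',\dots,\xi_n')\in\Lambda_{k'}$ in an equivalence class distinct from that of $(\xi_1,\dots,\xi_n)$, I must produce an index $i$ with $\xi_i'\mid\varphi_i(\xi_i)$, since only then does Lemma~\ref{lemma 1} annihilate the corresponding tensor. My approach is a case analysis on the position of $k'$ relative to $[r_1,r_2]$. If $r_1\le k'\le r_2$, pick any $i$ with $P(\xi_i')\neq P(\xi_i)$; monotonicity gives $\varphi_{i,k'}\mid\varphi_{i,r_2}$ when $i\in A$ and $\varphi_{i,k'}\mid\varphi_{i,r_1}$ when $i\in B$, so $\xi_i'$ divides $\varphi_{i,r_2}$ or $\varphi_{i,r_1}$ respectively, and since $P(\xi_i')\neq P(\xi_i)$ the prime power $P(\xi_i)^{l_{i,r_\ast}}$ can be removed to conclude $\xi_i'\mid\varphi_i(\xi_i)$. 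If $k'>r_2$, the maximality of $r_2$ produces an $i_0\in B$ with $P(\xi_{i_0})\nmid\varphi_{i_0,k'}$, which automatically gives $P(\xi_{i_0}')\neq P(\xi_{i_0})$, and the decreasing property of $\Phi_{i_0}$ yields $\xi_{i_0}'\mid\varphi_{i_0,r_1}$, hence $\xi_{i_0}'\mid\varphi_{i_0}(\xi_{i_0})$. The case $k'<r_1$ is symmetric with the roles of $A$ and $B$ swapped.

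With both identities in hand, the two inequalities follow exactly as in Theorem~\ref{mainth1}. Applying $M_{\varphi_1(\xi_1)}^*\ot\cdots\ot M_{\varphi_n(\xi_n)}^*$ to $\q_{\bm\Phi}$ and combining the two identities recovers $\q(\xi_1,\dots,\xi_n)$, whence Lemma~\ref{lemma 2} gives $\mbox{co-rank}\,\q(\xi_1,\dots,\xi_n)\le\mbox{co-rank}\,\q_{\bm\Phi}$; taking the supremum over $(\xi_1,\dots,\xi_n)\in\Lambda$ provides one direction. For the reverse inequality, assuming $m_0:=\sup_{(\xi_1,\dots,\xi_n)\in\Lambda}\#\tilde Z(\xi_1,\dots,\xi_n)<\infty$, I would build vectors $G_1,\dots,G_{m_0}\in\q_{\bm\Phi}$ exactly as in Theorem~\ref{mainth1}, as convergent scalar-weighted sums over $[\Lambda]$ of tensor products of unit star-cyclic vectors. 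The same case analysis, applied componentwise to $M_{\varphi_1(\xi_1)}^*\ot\cdots\ot M_{\varphi_n(\xi_n)}^*(G_r)$, extracts a star-cyclic vector of each $\q_{P(\xi_1)^{l_{1,g(r)}}}\ot\cdots\ot\q_{P(\xi_n)^{l_{n,g(r)}}}$, showing that $\{G_1,\dots,G_{m_0}\}$ is a star-generating set for $\q_{\bm\Phi}$.
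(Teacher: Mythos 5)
Your proof is correct and follows the same route as the paper: you define the auxiliary inner functions $\varphi_i(\xi_i)$ as $\varphi_{i,r_2}/P(\xi_i)^{l_{i,r_2}}$ for $i\in A$ and $\varphi_{i,r_1}/P(\xi_i)^{l_{i,r_1}}$ for $i\in B$, exactly as the paper does, and then replay the argument of Theorem~\ref{mainth1}. The only difference is that the paper merely asserts that the analogues of \eqref{main5} and \eqref{main4} hold and that the proof ``follows along the same line,'' whereas you usefully make explicit the three-case analysis (on $k'$ relative to $[r_1,r_2]$) that justifies the annihilation identity \eqref{main4} with these truncated $\varphi_i(\xi_i)$ in place of the lcm-based ones.
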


\begin{Remark}\label{ce}
The above theorem, restricted to $n=2$ case, is related to Theorem
4.2 in ~\cite{I1}. However, the formulation of Theorem 4.2 in
\cite{I1} turns out to be incorrect. This will be discussed at the
end of the final section.
\end{Remark}

In the present context, for $(\xi_1,\dots,\xi_n)\in \Lambda$, it is
also possible to describe the set $\tilde{Z}(\xi_1,\dots,\xi_n)$.
Let $l_{i,k} = \mbox{order} (\vp_{i,k}, P(\xi_i))$, as before (see
\eqref{order}), for all $i = 1, \ldots, n$ and $k \in Z(\xi_1,\dots,\xi_n)$. Note
that $l_{i,k}\ge l_{i,k+1}$ for all $i\in B$, and $l_{i,k}\le
l_{i,k+1}$ for all $i\in A$. Now we proceed to construct
$\tilde{Z}(\xi_1,\dots,\xi_n)$ as follows. Set
\begin{equation}
\label{zeta} \zeta_{i,k}:=  \frac{\varphi_{i,k}}{\varphi_{i,k-1}},
\quad \quad (i\in A, k\in Z(\xi_1,\dots,\xi_n))
\end{equation}
and
\begin{equation}
\label{Iset}
 I(\xi_1,\dots,\xi_n):=\{ k\in Z(\xi_1,\dots,\xi_n): P(\xi_i)|\zeta_{i,k}
 \text{ for some } i\in A\}.
\end{equation}
It is clear that $r_1\in I(\xi_1,\dots,\xi_n)$ and hence
$\tilde{Z}(\xi_1,\dots,\xi_n)=\{r_1\}$ when $\#
I(\xi_1,\dots,\xi_n)=1$. Now suppose we have $\#
I(\xi_1,\dots,\xi_n) = m+1 > 1$, for some $m \in \mathbb{N}$, and
(without loss of any generality)
\[I(\xi_1,\dots,\xi_n)= \{k_0=r_1<k_1<k_2<\dots<k_m\le r_2\}.\]
Define
\begin{equation}
\label{eta} \eta_{i,k_j}:= \frac{\varphi_{i,
k_j}}{\varphi_{i,k_{j+1}}}.\quad \quad (0\le j\le m-1, i\in B)
\end{equation}
Then $\tilde{Z}(\xi_1,\dots,\xi_n)=\{k_m\}\cup\{k_j\in
I(\xi_1,\dots,\xi_n): k_j\ne k_m, P(\xi_i)|\eta_{i, k_j} \text { for
some } i\in B\}$.

The above discussion, along with Theorem \ref{mainth2}, may be
summarized in the following.

\begin{Theorem}
\label{zt} Let $\Phi_i=\{\varphi_{i,k}\}_{k=-\infty}^\infty$,
$i=1,\dots,n$, be as in the statement of Theorem~\ref{mainth2}. Then
\begin{equation*}
\mbox{co-rank}~\q_{\bm{\Phi}}=\sup_{(\xi_1,\dots,\xi_n)\in
\Lambda}\#\tilde{Z}(\xi_1,\dots,\xi_n).
\end{equation*}
Moreover, for all $(\xi_1, \ldots, \xi_n) \in \Lambda$,
\[\#\tilde{Z}(\xi_1,\dots,\xi_n)=1+ \#\{ k_j\in
I(\xi_1,\dots,\xi_n): k_j\ne k_m,  P(\xi_i)|\eta_{i,k_j} \text{ for
some } i\in B \},\]
where $I(\xi_1,\dots,\xi_n)$ is as in~\eqref{Iset}, and $\eta_{i,k_j}$
is as in ~\eqref{eta}.
\end{Theorem}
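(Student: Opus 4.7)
The first equality is simply a restatement of the conclusion of Theorem~\ref{mainth2}, so only the cardinality formula for $\#\tilde{Z}(\xi_1,\dots,\xi_n)$ requires a proof. Fix $(\xi_1,\dots,\xi_n)\in \Lambda$ and write $l_{i,k}=\mbox{ord}(\varphi_{i,k},P(\xi_i))$. By the reduction procedure introduced in Section~\ref{sec:2} and reused in Proposition~\ref{minimal of q}, $\#\tilde{Z}(\xi_1,\dots,\xi_n)$ equals the number of maximal elements of the finite set $\{(l_{1,k},\dots,l_{n,k}):k\in Z(\xi_1,\dots,\xi_n)\}$ under the componentwise partial order. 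Thus the plan is to identify these maximal tuples explicitly, using the monotonicity
\[
l_{i,k}\le l_{i,k+1}\ (i\in A),\qquad l_{i,k}\ge l_{i,k+1}\ (i\in B),
\]
which is immediate from the hypothesis that $\Phi_i$ is increasing on $A$ and decreasing on $B$.

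The first step is to show that every maximal index lies in $I(\xi_1,\dots,\xi_n)$. Given $k\in Z(\xi_1,\dots,\xi_n)\setminus I(\xi_1,\dots,\xi_n)$, let $k_j$ denote the largest element of $I(\xi_1,\dots,\xi_n)$ with $k_j\le k$, which exists because $r_1\in I(\xi_1,\dots,\xi_n)$. No $A$-coordinate changes between $k_j$ and $k$, so $l_{i,k}=l_{i,k_j}$ for every $i\in A$, while the monotonicity on $B$ gives $l_{i,k}\le l_{i,k_j}$ for every $i\in B$; hence the tuple at $k_j$ weakly dominates the tuple at $k$, and $k$ is removed. Moreover, for $j<j'$ some $A$-coordinate strictly increases as the index crosses $k_{j+1}$, so distinct points of $I(\xi_1,\dots,\xi_n)$ produce distinct tuples. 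Consequently, $\#\tilde{Z}(\xi_1,\dots,\xi_n)$ equals the number of $k_j\in I(\xi_1,\dots,\xi_n)$ whose tuple is maximal.

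Next I analyze each $k_j\in I(\xi_1,\dots,\xi_n)$. The tuple at $k_m$ is always maximal: for $k\in Z(\xi_1,\dots,\xi_n)$ with $k>k_m$, all $A$-coordinates agree with those at $k_m$ (no $A$-change to the right of $k_m$) while the $B$-coordinates are weakly smaller; and for $k_j<k_m$ some $A$-coordinate is strictly smaller at $k_j$. Hence $k_m\in\tilde{Z}(\xi_1,\dots,\xi_n)$. For $k_j$ with $j<m$, the $A$-coordinate comparison rules out all indices to the left of $k_j$, so only indices $>k_j$ matter. If $P(\xi_i)\nmid\eta_{i,k_j}$ for every $i\in B$, then $l_{i,k_j}=l_{i,k_{j+1}}$ for every $i\in B$ while some $A$-coordinate strictly increases from $k_j$ to $k_{j+1}$; thus the tuple at $k_{j+1}$ strictly dominates that at $k_j$, and $k_j$ is eliminated. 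Conversely, if $P(\xi_i)\mid\eta_{i,k_j}$ for some $i\in B$, then $l_{i,k_{j+1}}<l_{i,k_j}$, and monotonicity forces $l_{i,k}\le l_{i,k_{j+1}}<l_{i,k_j}$ for every $k\ge k_{j+1}$ in $Z(\xi_1,\dots,\xi_n)$; for $k\in(k_j,k_{j+1})$ one has $(l_{i,k})_{i\in A}=(l_{i,k_j})_{i\in A}$ and $l_{i,k}\le l_{i,k_j}$ on $B$, so no such $k$ can strictly dominate the tuple at $k_j$. Hence $k_j\in\tilde{Z}(\xi_1,\dots,\xi_n)$.

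Combining the two cases yields
\[
\tilde{Z}(\xi_1,\dots,\xi_n)=\{k_m\}\cup\{k_j\in I(\xi_1,\dots,\xi_n):k_j\ne k_m,\ P(\xi_i)\mid\eta_{i,k_j}\text{ for some }i\in B\},
\]
which is precisely the claimed cardinality. The main technical subtlety is the bookkeeping for indices $k\in(k_j,k_{j+1})$, where the $B$-coordinates may decrease inside the open interval without changing at $k_{j+1}$; this is handled cleanly by the monotonicity sandwich $l_{i,k_j}\ge l_{i,k}\ge l_{i,k_{j+1}}$ for $i\in B$, which prevents such interior indices from producing any new maximal tuple beyond those occurring at points of $I(\xi_1,\dots,\xi_n)$.
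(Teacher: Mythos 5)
Your proof is correct and follows the same construction as the paper's discussion preceding Theorem~\ref{zt}: the monotonicity of the orders $l_{i,k}$ in $k$, the role of $I(\xi_1,\dots,\xi_n)$ via the $\zeta_{i,k}$, and the quotients $\eta_{i,k_j}$. The paper states the characterization of $\tilde{Z}(\xi_1,\dots,\xi_n)$ without detailed justification, and your argument supplies exactly the verification that is implicit there (in particular the bookkeeping for indices strictly between consecutive points of $I(\xi_1,\dots,\xi_n)$).
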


\newsection{Concluding Remarks}

We now present a simple example which illustrate the main idea of
this paper.

\noindent Let
$\{\{\alpha_{i,k}\}_{k=-\infty}^{\infty}:i=1,\dots,n\}$ be a
collection of sequences of distinct points in $\D$ such
that\[\sum_{k=-\infty}^{\infty}(1-|\alpha_{i,k}|)<\infty. \quad
\quad (i=1,\dots,n)\] Let $A$ be a proper non-empty subset of
$\{1,\dots,n\}$ and $B:=\{1,\dots,n\}\setminus A$. Also consider the
following sequences of Blaschke products
\begin{equation*} \varphi_{i,k}=\left\{\begin{array}{cl}
\prod_{j=k}^{\infty} b_{\alpha_{i,j}}& \text{ if } i\in B;\\
\prod_{j=-\infty}^k b_{\alpha_{i,j}} & \text{ if } i\in A.
\end{array}\right.
\end{equation*}
Consequently, $\{\varphi_{i,k}\}_{k=-\infty}^{\infty}$ is an
increasing sequence for each $i\in
 A$ and decreasing sequence for each $i\in B$. Let
$(\xi_1,\dots,\xi_n)\in \Lambda_m$ for some $m\in\Int$. Then
$\xi_i=b_{\alpha_{i,k_i}}$, where $k_i\ge m$ for all $i\in B$ and
$k_i\le m$ for all $i\in A$. In this case, $r_1=\max\{k_i: i\in A\}$
and $r_2=\min\{k_i:i\in B\}$. From the fact that the set of points
are distinct, we deduce that $I(\xi_1,\dots,\xi_n)=\{r_1\}$. Hence
$\tilde{Z}(\xi_1,\dots,\xi_n)=\{r_1\}$, and consequently,
$\mbox{co-rank~} \clq_{\bm{\Phi}} = 1$.

To end this paper, we construct a counter example, as promised in
Remark \ref{ce}, to point out an error in the formulation of Theorem
4.2 in ~\cite{I1}.

Let $\Phi=\{\varphi_m\}_{m=-\infty}^{\infty}$ be a decreasing
sequence of Blaschke products, and let
$\Psi=\{\psi_m\}_{m=-\infty}^{\infty}$ be an increasing sequence of
Blaschke products such that each of the sequence does not have any
non-constant common inner factor. First, for the sake of convenience
we state Theorem 4.2 from \cite{I1}.

\begin{Theorem}[Theorem 4.2, \cite{I1}]\label{error}
Let $\Phi$ and $\Psi$ be as above. Then
\[
\mbox{co-rank}~ \clq_{(\Phi,\Psi)} = \sup_{j\ge
1}\#\{n:\zeta_n(\alpha_j)=\xi_n(\beta_j)=0, -\infty<n<\infty\},
\]
where
\[
\zeta_m=\varphi_m/\varphi_{m+1} \text{ and } \xi_m
=\psi_m/\psi_{m-1}, \quad (m \in\Int)
\]
and $(\alpha_j,\beta_j)_{j\ge 1}$ is the enumeration of the countable set
$Z=\{(\alpha,\beta)\in\D^2:\varphi_m(\alpha)=\psi_m(\beta)=0\
\text{for some } m\in\Int\}$.
\end{Theorem}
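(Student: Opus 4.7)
The plan is to derive Theorem~\ref{error} as the $n=2$ specialization of Theorem~\ref{mainth2}, taking $A=\{2\}$ and $B=\{1\}$, so that $\Phi=\Phi_1$ is decreasing and $\Psi=\Phi_2$ is increasing. The strategy is to apply the refinement in Theorem~\ref{zt}, which already expresses $\#\tilde{Z}(\xi_1,\xi_2)$ in terms of the divisibility of consecutive ratios $\zeta_{i,k}$ and $\eta_{i,k_j}$, and then to match these ratios with the sequences $\zeta_m$ and $\xi_m$ appearing in Theorem~\ref{error}.

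First I would identify the equivalence classes $[\Lambda]$ with the countable set $Z$. Each class is determined by its pair of prime inner factors $(P(\xi_1),P(\xi_2))=(b_\alpha,b_\beta)$, and such a pair appears in some $\Lambda_m$ precisely when $b_\alpha | \varphi_m$ and $b_\beta | \psi_m$ simultaneously, i.e., iff $(\alpha,\beta)\in Z$. This produces the enumeration $(\alpha_j,\beta_j)_{j\ge 1}$ indexing the outer supremum.

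Next I would expand $\#\tilde{Z}(\xi_1,\xi_2)$ via Theorem~\ref{zt}. With the present choice of $A,B$, the definition \eqref{Iset} of $I(\xi_1,\xi_2)$ becomes $\{k\in Z(\xi_1,\xi_2): b_\beta | \psi_k/\psi_{k-1}\}=\{k:\xi_k(\beta_j)=0\}$. Listing $I=\{k_0=r_1<k_1<\cdots<k_m\}$, Theorem~\ref{zt} gives
\[
\#\tilde Z(\xi_1,\xi_2)=1+\#\{k_l\in I: k_l\neq k_m,\ b_\alpha | \varphi_{k_l}/\varphi_{k_{l+1}}\}.
\]
The final step of the plan is to unpack the last condition: one would like to argue that $b_\alpha | \varphi_{k_l}/\varphi_{k_{l+1}}$ exactly when there is some $n\in\{k_l,\ldots,k_{l+1}-1\}$ with $\zeta_n(\alpha_j)=0$, and then to pair this with the requirement $\xi_n(\beta_j)=0$ so as to recover the Izuchi sum $\#\{n:\zeta_n(\alpha_j)=\xi_n(\beta_j)=0\}$.

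The main obstacle — and, given Remark~\ref{ce}, the decisive one — is precisely this final simplification step. Each interval $[k_l,k_{l+1})$ can contain several drop-indices of $\alpha$, yet contributes only $1$ to $\#\tilde Z$; conversely, an interval in which $b_\alpha$ does divide $\varphi_{k_l}/\varphi_{k_{l+1}}$ need not contain any index where $\zeta_n(\alpha_j)=0$ and $\xi_n(\beta_j)=0$ hold \emph{simultaneously}, since the drop of $\alpha$ may occur strictly inside $(k_l,k_{l+1})$ while the rise of $\beta$ is pinned to the endpoint $k_l\in I$. There is thus no canonical bijection between the drop-indices of $\alpha$ and the rise-indices of $\beta$, and any attempt to equate the two counts along the lines of Izuchi's formula founders here. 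I would expect the counterexample promised in Remark~\ref{ce} to be engineered precisely to make these two suprema differ.
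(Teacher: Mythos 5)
Your analysis is correct, but it is worth being explicit about something your proposal only hints at: Theorem~\ref{error} is \emph{not} a result this paper proves. It is quoted from Izuchi et al.\ precisely so that Section~6 can refute it with a counterexample, and the paper's Remark~\ref{ce} flags this in advance. You have in effect recognized this — the obstruction you isolate at the end, namely that an interval $(k_l,k_{l+1})$ between consecutive rise-indices of $\beta$ can contribute $1$ to $\#\tilde{Z}$ without containing any index $n$ at which $\zeta_n(\alpha_j)=0$ and $\xi_n(\beta_j)=0$ hold \emph{simultaneously}, is exactly the defect the paper exploits. Your reduction steps (identifying $[\Lambda]$ with the set $Z$, specializing the set $I(\xi_1,\xi_2)$ from~\eqref{Iset}, and reading off $\#\tilde{Z}$ from Theorem~\ref{zt}) are all correct.

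Where the paper goes further is in producing an explicit witness. It takes two sequences of zeros, all distinct except $a_k=a=a_{k+3}$ and $c_k=c=c_{k+2}$, so that $\zeta_m=b_{a_m}$ and $\xi_m=b_{c_m}$. Then $\zeta_m(a)=0$ exactly for $m\in\{k,k+3\}$ and $\xi_m(c)=0$ exactly for $m\in\{k,k+2\}$, so the Izuchi count $\#\{m:\zeta_m(a)=\xi_m(c)=0\}=1$. Yet $Z_j=\{k,k+1,k+2,k+3\}$, the order tuples are $(2,1),(1,1),(1,2),(1,2)$, the minimal representation of $\mathcal{N}_j$ collapses to $\clq_{b_a^2}\ot\clq_{b_c}\vee\clq_{b_a}\ot\clq_{b_c^2}$, and hence $\#\tilde{Z}(b_a,b_c)=2$, so co-rank $\clq_{(\Phi,\Psi)}=2$, not $1$. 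Your proposal correctly diagnoses the mechanism but stops at ``I would expect the counterexample to be engineered precisely to make these two suprema differ''; the paper's contribution is to actually engineer it and thereby invalidate both the identity~\eqref{error identity} and Theorem~\ref{error}.
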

We need some more notations in the spirit of \cite{I1}. For $j\ge 1$
and $(\alpha_j,\beta_j)\in Z$, define
\[
 Z_j=\{n:\varphi_n(\alpha_j)=\psi_n(\beta_j)=0, -\infty<n<\infty\},
\]
and
\[
\mathcal{N}_{j}=\sum_{n\in Z_j}\clq_{b_{\alpha_j}^{r_{j,n}}}\ot
\clq_{b_{\beta_j}^{s_{j,n}}},
\]
where $r_{j,n}=\mbox{ord}(\varphi_n,b_{\alpha_j})$ and
$s_{j,n}=\mbox{ord}(\psi_n,b_{\beta_j})$ for all $n\in Z_j$.

We note first that the proof of Theorem~\ref{error} (or Theorem 4.2
in \cite{I1}), as pointed out by the authors, is based on the
following identity:
\begin{equation}
\label{error identity}
 \#\{n:\zeta_n(\alpha_j)=\xi_n(\beta_j)=0, -\infty<n<\infty\}=m_{j},
\end{equation}
where $j\ge 1$, $(\alpha_j,\beta_j)\in Z$ and $m_j$ is the minimum
number required to represent the quotient module $\mathcal{N}_j$.
However,  the above equalities does not hold in general, and hence
Theorem~\ref{error} is also incorrect. The next example demonstrates
that the above equality and Theorem \ref{error} are incorrect.

Let $\{a_m\}_{m=-\infty}^{\infty}$ and
$\{c_m\}_{m = -\infty}^{\infty}$ be a pair of sequences of
points in $\D$ such that
\[\sum_{m=-\infty}^\infty (1 - |a_m|), \sum_{m=-\infty}^\infty (1 - |c_m|) < \infty,\]
and all elements are distinct but $a_k=a=a_{k+3}$ and
$c_k=c=c_{k+2}$ for some fixed $k\in\Int$. Consider the following
sequences of Blaschke products $\Phi=\{\varphi_m\}_{m\in\Int}$ and
$\Psi=\{\psi_m\}_{m\in\Int}$, where
\[
 \varphi_m:=\prod_{j=m}^{\infty}b_{a_j} \text{ and }
 \psi_m:=\prod_{j=-\infty}^m b_{c_j}. \quad \quad (m\in\Int)
 \]
Notice that
\[
 \zeta_m=\varphi_m/\varphi_{m+1}=b_{a_m} \text{ and }
 \xi_m =\psi_m/\psi_{m-1}=b_{c_m}. \quad (m \in\Int)
\]
Furthermore we notice that if $\alpha_j=a$ and $\beta_j=c$, then
\[
 \#\{m:\zeta_m(a)=\xi_m(c)=0, m\in\Int\}=
 \#\{m: b_{a}|b_{a_m}, b_{c}|b_{c_m}, m\in\Int\}=\#\{k\}= 1,
\]
whereas \[Z_j=\{k, k+1, k+2, k+3\},\] and
\begin{equation}
\label{N_j}
 N_j=\clq_{{b_a}^2}\ot\clq_{b_c}\vee \clq_{b_a}\ot\clq_{b_c}\vee
  \clq_{b_a}\ot\clq_{{b_c}^2}\vee \clq_{b_a}\ot\clq_{{b_c}^2}=
  \clq_{{b_a}^2}\ot\clq_{b_c}\vee\clq_{b_a}\ot\clq_{{b_c}^2}.
\end{equation}
The above identity shows that $m_j$ has to be $2$, and hence ~\eqref{error identity}
is not correct.

Also note that for any $(\alpha_j, \beta_{j})\in Z $,
\[
 \#\{m:\zeta_m(\alpha_j)=\xi_m(\beta_j)=0, m\in\Int\}\le 1.
\]
Therefore, by Theorem~\ref{error}, the co-rank of the
quotient module $\clq_{(\Phi,\Psi)}$ is $1$.

\noindent However, since the the co-rank of $\mathcal{N}_j=2$
(follows from \eqref{N_j}), by Theorem~\ref{mainth1} (or by Theorem
4.1 in ~\cite{I1}) the co-rank of $\clq_{(\Phi,\Psi)}$ is at least
$2$. This shows that the
 formulation of Theorem 4.2 in  \cite{I1} is also incorrect.

On the other hand, one can easily calculate, using the formula in
Theorem~\ref{zt}, that  \[\#\tilde{Z}(b_a,b_c)=2,\] and for any
other $(b_{a_i}, b_{c_j})\in\Lambda$
\[\#\tilde{Z}(b_{a_i},b_{c_j})\le 1.\] Consequently,
the co-rank of $\clq_{(\Phi,\Psi)}$ is precisely $2$.



\vskip10pt

\noindent\textbf{Acknowledgement:} We would like to thank the
referee for providing us with constructive comments and suggestions.
The first author acknowledges with thanks financial support  from
the Department of Atomic Energy, India through N.B.H.M Post Doctoral
Fellowship and the second author is grateful to Indian Statistical
Institute, Bangalore Center for warm hospitality.



\begin{thebibliography}{99}

\bibitem{ADC}
O. Agrawal, D. Clark and R. Douglas, {\em Invariant subspaces in the
polydisk}, Pacific J. Math. \textbf{121} (1986), 1–-11.

\bibitem{AC}
P. Ahern and D. Clark, {\em Invariant subspaces and analytic
continuations in several variables}, J. Math. Mech. \textbf{19} (1969/1970),
963–-969.

\bibitem{CDS}
A. Chattopadhyay, B.K. Das and J. Sarkar, {\em Tensor product of
quotient Hilbert modules}, arXiv:1310.5122, preprint.


\bibitem{CG}
X. Chen and K. Guo, {\em Analytic Hilbert Modules}, Chapman \&
Hall/CRC Research Notes in Mathematics, \textbf{433} Chapman \& Hall/CRC,
Boca Raton, FL, 2003.

\bibitem{I1}
K.J. Izuchi, K.H. Izuchi  and Y. Izuchi, {\em Blaschke products and
the rank of backward shift invariant subspaces over the bidisk}, J.
Funct. Anal. \textbf{261} (2011), 1457–-1468.

\bibitem{I2}
K.J. Izuchi, K.H. Izuchi  and Y. Izuchi, {\em Ranks of invariant
subspaces of the Hardy space over the bidisk}, J. Reine Angew. Math,
\textbf{659} (2011), 101–-139.

\bibitem{DP}
R. G. Douglas and V. I. Paulsen,  {\em Hilbert Modules over Function
Algebras}, Research Notes in Mathematics Series, \textbf{47}, Longman,
Harlow, 1989.



\bibitem{DPSY}
R. Douglas, V. Paulsen, C.-H. Sah and K. Yan, {\em Algebraic
reduction and rigidity for Hilbert modules}, Amer. J. Math.
\textbf{117} (1995), no. 1, 75–-92.



\bibitem{Guo1}
K. Guo, {\em Characteristic spaces and rigidity for analytic Hilbert
modules}, J. Funct. Anal. \textbf{163} (1999), no. 1, 133–-151.



\bibitem{Guo2}
K. Guo, {\em Algebraic reduction for Hardy submodules over polydisk
algebras}, J.  Oper. Theory, 163 (1999), no. 1, 133–-151.

\bibitem{rudin}
W. Rudin, {\em Function theory in polydiscs}, Benjamin, New York,
1969.

\bibitem{JS2}
J. Sarkar, {\em Jordan Blocks of $H^2(\mathbb{D}^n)$},
arXiv:1303.1041, to appear in J. Oper. Theory.


\bibitem{JS1}
J. Sarkar, {\em Submodules of the Hardy module over polydisc},
arXiv:1304.1564, to appear in Israel Journal of Mathematics.

\bibitem{MS}
M. Seto, {\em Infinite sequences of inner functions and submodules
in $H^2({\mathbb D}^2)$}, J. Oper. Theory pp. 75–-86.


\bibitem{Se}
M. Seto and R. Yang, {\em Inner sequence based invariant subspaces
in $H^2(\D^2)$}, Proc. Amer. Math. Soc. \textbf{135} (2007), 2519–-2526.

\end{thebibliography}
\end{document}